\theoremstyle{plain}
\newtheorem{letterthm}{Theorem}[section]
\newtheorem{lettercor}[letterthm]{Corollary}
\newtheorem{thm}{Theorem}[section]
\newtheorem{lem}[thm]{Lemma}
\newtheorem{prop}[thm]{Proposition}
\theoremstyle{definition}
\newtheorem{defn}[thm]{Definition}
\newtheorem*{defn*}{Definition}
\newtheorem*{question*}{Question}
\newtheorem{rem}[thm]{Remark}
\numberwithin{equation}{section}
\newcommand{\DG}{\Delta\Gamma}
\newcommand{\pG}{\partial\Gamma}
\newcommand{\N}{\mathbb{N}}
\newcommand{\R}{\mathbb{R}}
\newcommand{\C}{\mathbb{C}}
\newcommand{\Z}{\mathbb{Z}}
\newcommand{\cB}{\mathcal{B}}
\newcommand{\cN}{\mathcal{N}}
\newcommand{\cR}{\mathcal{R}}
\newcommand{\cU}{\mathcal{U}}
\newcommand{\Stab}{\operatorname{Stab}}
\newcommand{\conv}{\operatorname{conv}}
\newcommand{\Ad}{\operatorname{Ad}}
\newcommand{\BS}{\operatorname{BS}}
\newcommand{\SL}{\operatorname{SL}}
\newcommand{\Prob}{\operatorname{Prob}}
\newcommand{\HNN}{\operatorname{HNN}}
\newcommand{\rk}{\operatorname{rk}}
\newcommand{\defin}{singular }
\begin{document}

\title[Maximal amenable subalgebras from maximal amenable subgroups]{Maximal amenable von Neumann subalgebras arising from maximal amenable subgroups}

\author{R\'emi Boutonnet \& Alessandro Carderi}
\email{rboutonnet@ucsd.edu}
\email{alessandro.carderi@ens-lyon.fr}
\thanks{Research partially supported by ANR grant Neumann. R.B. is also partially supported by NSF Career Grant DMS 1253402.}

\begin{abstract}
We provide a general criterion to deduce maximal amenability of von Neumann subalgebras $L\Lambda \subset L\Gamma$ arising from amenable subgroups $\Lambda$ of discrete countable groups $\Gamma$. The criterion is expressed in terms of $\Lambda$-invariant measures on some compact $\Gamma$-space. The strategy of proof is different from S. Popa's approach to maximal amenability via central sequences \cite{Po83}, and relies on elementary computations in a crossed-product C$^*$-algebra.
\end{abstract}

\maketitle

%%%%%
\section*{Introduction}
A (separable) finite von Neumann algebra $A \subset B(H)$ is said to be {\it amenable} if there exists a state $\varphi$ on $B(H)$, which is $A$-central, meaning that $\varphi(xT) = \varphi(Tx)$ for all $x \in A$ and all $T \in B(H)$. Moreover, this definition does not depend on the choice of the Hilbert space $H$ on which $A$ is represented.

Amenability has always played a central role in the study of von Neumann algebras. First it is a source of isomorphism, via the fundamental result of A. Connes \cite{Co76} that amenable implies hyperfinite, and the uniqueness of the hyperfinite II$_1$-factor. Amenability is also at the core of the concepts of solidity and strong solidity defined in \cite{Oz04,OP10a}. It is hence very natural to try to understand the maximal amenable subalgebras of a given finite von Neumann algebra.

In this direction, R. Kadison asked in the 1960's the following question: is any maximal amenable subalgebra of a II$_1$-factor necessarily a factor? S. Popa solved this problem in \cite{Po83}, producing an example of a maximal amenable subalgebra of the free group factor $LF_n$ which is abelian. The subalgebra in question is generated by one of the free generators of the free group $F_n$. This striking result led to more questions, refining Kadison's question: what if the ambient II$_1$-factor is McDuff? has property (T)? More generally can one provide concrete examples of maximal amenable subalgebras in a given II$_1$-factor? Some progress on this topic have been made recently.

By considering infinite tensor products of free group factors, J. Shen constructed in \cite{Sh06} an abelian, maximal amenable subalgebra in a McDuff II$_1$-factor. In \cite{CFRW10}, it is proved that the subalgebra of the free group factor generated by the symmetric laplacian operator (the {\it radial subalgebra}) is maximal amenable. In \cite{Ho14}, C. Houdayer provided uncountably many non-isomorphic examples of abelian maximal amenable subalgebras in II$_1$-factors. Last year the authors showed in \cite{BC14} that any infinite maximal amenable subgroup in a hyperbolic group $\Gamma$ gives rise to a maximal amenable von Neumann subalgebra of $L\Gamma$. Note that hyperbolic groups can have property (T). This last result is a rigidity type result in the spirit of Popa's approach to study von Neumann algebras in terms of their ``building data''.

\begin{question*}
Assume that $\Lambda < \Gamma$ is a maximal amenable subgroup. Under which conditions is $L\Lambda$ maximal amenable inside $L\Gamma$?
\end{question*}

In this paper, we provide a general sufficient condition ensuring this rigidity phenomenon.

\begin{defn*}
Consider an amenable subgroup $\Lambda$ of a discrete countable group $\Gamma$. Suppose that $\Gamma$ acts continuously on the compact space $X$. We say that $\Lambda$ is \textit{\defin} in $\Gamma$ (with respect to $X$) if for any $\mu \in \Prob_\Lambda(X)$ and $g \notin \Lambda$, we have $g \cdot \mu \perp \mu$. 
\end{defn*}

In the above definition, $\Prob_\Lambda(X)$ denotes the set of $\Lambda$-invariant Borel probability measures on $X$.

\begin{letterthm}
\label{criterion}
Let $\Gamma$ be a countable discrete group and $\Lambda<\Gamma$ be an amenable singular subgroup.
Then for any trace preserving action $\Gamma \curvearrowright (Q,\tau)$ on a finite amenable von Neumann algebra, $Q \rtimes \Lambda$ is maximal amenable inside $Q \rtimes \Gamma$.
\end{letterthm}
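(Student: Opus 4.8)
\emph{Proof strategy.} The plan is to derive maximal amenability from an invariant‑measure argument performed in an auxiliary crossed‑product C$^{*}$‑algebra. First note that $Q\rtimes\Lambda$ is itself amenable, being the crossed product of an amenable von Neumann algebra by an amenable group, so the statement is meaningful. Write $M=Q\rtimes\Gamma$, $N=Q\rtimes\Lambda$, and let $E_{N}\colon M\to N$, $E_{Q}\colon M\to Q$ denote the canonical trace‑preserving conditional expectations. Let $P$ be a von Neumann subalgebra with $N\subseteq P\subseteq M$ and $P$ amenable; since $P$ is generated by its unitaries and $E_{N}$ is $\|\cdot\|_{2}$‑contractive and $N$‑bimodular, it suffices to prove that $p\in P$ and $E_{N}(p)=0$ force $\tau(p^{*}p)=0$. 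Such a $p$ has the form $p=\sum_{g\notin\Lambda}q_{g}u_{g}$ with $q_{g}=E_{Q}(pu_{g}^{*})\in Q$.

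Second, I would set up the ambient algebra. Fix a point $x_{0}\in X$: the orbit map $g\mapsto g\cdot x_{0}$ yields a $\Gamma$‑equivariant unital $*$‑homomorphism $\iota\colon C(X)\to\ell^{\infty}(\Gamma)$, where $\Gamma$ acts on $\ell^{\infty}(\Gamma)$ by left translation. Represent $M$ in standard form on $L^{2}M\cong L^{2}(Q)\otimes\ell^{2}(\Gamma)$, so that each $u_{g}$ implements left translation on $\ell^{\infty}(\Gamma)$ while $Q$ commutes with $1\otimes\ell^{\infty}(\Gamma)$, and put $\mathcal{A}:=C^{*}\big(M,\,1\otimes\iota(C(X))\big)\subseteq B(L^{2}M)$. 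Because $1\otimes\iota(C(X))$ is $\Gamma$‑equivariantly normalised by the $u_{g}$'s, the canonical dual $\Gamma$‑coaction of $M$ extends to $\mathcal{A}$ (declaring $1\otimes\iota(C(X))$ of degree zero), giving a conditional expectation $E_{0}\colon\mathcal{A}\to\mathcal{D}$ onto the degree‑zero part $\mathcal{D}:=C^{*}\big(Q,\,1\otimes\iota(C(X))\big)$, which vanishes on $u_{g}$ for $g\neq e$ and restricts to $E_{Q}$ on $M$.

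Third — and this is the step I expect to be the main obstacle — I would extract from the amenability of $P$ a state $\Phi$ on $\mathcal{A}$ satisfying simultaneously: (a) $\Phi$ is a $P$‑hypertrace, i.e.\ $\Phi(xa)=\Phi(ax)$ for $x\in P$, $a\in\mathcal{A}$, and $\Phi|_{P}=\tau$; (b) $\Phi=\Phi\circ E_{0}$; and (c) $\Phi|_{\mathcal{D}}$ is the product state $\tau_{Q}\otimes\mu$ for some probability measure $\mu$ on $X$. Property (a) alone is automatic from injectivity of $P$: take $\Phi=\tau\circ\mathcal{E}$ for a conditional expectation $\mathcal{E}\colon B(L^{2}M)\to P$. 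To secure (b) and (c) as well, the idea is to first fix a $\Lambda$‑invariant probability measure $\mu_{0}$ on $X$ (which exists since $\Lambda$ is amenable), represent $\mathcal{A}$ on the GNS space of the vacuum state $\psi_{\mu_{0}}:=(\tau_{Q}\otimes\mu_{0})\circ E_{0}$ (which restricts to $\tau$ on $M$, hence is faithful there, so $P$ again sits inside amenably), and then run the injectivity construction in that representation \emph{without destroying the vacuum structure} — concretely, one works with Hilbert--Schmidt vectors that are almost $P$‑central and at the same time almost invariant under the Koopman representation of $\Lambda\curvearrowright(X,\mu_{0})$, which admits such vectors because, $\Lambda$ being amenable, all of its unitary representations are amenable; in the limit one gets $\Phi=\Phi\circ E_{0}$ and $\Phi|_{\mathcal{D}}=\tau_{Q}\otimes\mu$. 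Reconciling the soft hypertrace with the rigid degree‑zero/product structure is the genuinely delicate point, and is exactly where amenability of $\Lambda$ enters beyond that of $Q$. Granting such a $\Phi$, the measure $\mu$ is $\Lambda$‑invariant: for $g\in\Lambda$ one has $u_{g}\in N\subseteq P$, so by (a) $\int g\cdot f\,d\mu=\Phi\big(u_{g}(1\otimes\iota(f))u_{g}^{*}\big)=\Phi(1\otimes\iota(f))=\int f\,d\mu$; thus $\mu\in\Prob_{\Lambda}(X)$.

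Finally I would run the elementary computation bringing in singularity. Fix $f\in C(X)$ with $0\le f\le 1$. Expanding $p^{*}(1\otimes\iota(f))p$ in $\mathcal{A}$ and commuting the multiplication operators (which commute with $Q$) and the $u_{g}$'s — using $E_{N}(p)=0$ so that only $g\notin\Lambda$ occur, and the trace‑invariance of $\Gamma\curvearrowright(Q,\tau)$ — its degree‑zero part is $\sum_{g\notin\Lambda}\sigma_{g^{-1}}(q_{g}^{*}q_{g})(1\otimes\iota(g^{-1}\!\cdot f))$, so by (b) and (c),
\[
\Phi\big(p^{*}(1\otimes\iota(f))p\big)=\sum_{g\notin\Lambda}\|q_{g}\|_{2}^{2}\int_{X}f\,d\big((g^{-1})_{*}\mu\big).
\]
On the other hand $\Phi\big(p^{*}(1\otimes\iota(f))p\big)=\Phi\big((1\otimes\iota(f))pp^{*}\big)$ by (a), and the degree‑zero part of $(1\otimes\iota(f))pp^{*}$ is $\big(\sum_{g\notin\Lambda}q_{g}q_{g}^{*}\big)(1\otimes\iota(f))$, so the same number equals $\tau(p^{*}p)\int_{X}f\,d\mu$. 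As $f$ is arbitrary, we obtain the identity of finite positive Borel measures
\[
\sum_{g\notin\Lambda}\|q_{g}\|_{2}^{2}\,(g^{-1})_{*}\mu\;=\;\tau(p^{*}p)\,\mu .
\]
For each $g\notin\Lambda$ we have $g^{-1}\notin\Lambda$ and $\mu\in\Prob_{\Lambda}(X)$, so $(g^{-1})_{*}\mu\perp\mu$ by singularity; hence the left‑hand side, a countable sum of measures each singular to $\mu$, is singular to $\mu$, whereas the right‑hand side is a scalar multiple of $\mu$. A measure that is both singular to $\mu$ and a multiple of $\mu$ vanishes, so $\tau(p^{*}p)=0$. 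Therefore $p=0$, hence $E_{N}|_{P}=\mathrm{id}$, $P=N$, and $Q\rtimes\Lambda$ is maximal amenable in $Q\rtimes\Gamma$.
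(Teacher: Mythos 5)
Your overall architecture (a $P$-central state whose restriction to a commutative piece is a $\Lambda$-invariant measure, with singularity then killing the Fourier contributions outside $\Lambda$) is the right one and is close in spirit to the paper, but there is a genuine gap exactly at the step you yourself flag as the main obstacle: the existence of a state $\Phi$ on $\mathcal{A}$ satisfying (a), (b) and (c) simultaneously is never proved, and it is not a standard fact. Property (a) alone is the usual hypertrace coming from injectivity of $P$; but (b) asks $\Phi$ to vanish \emph{exactly} on every nonzero-degree term $d\,u_k$ (with $d$ in the degree-zero algebra and $k\neq e$, including $k\in\Lambda$), and (c) asks for an exact product structure $\tau_Q\otimes\mu$ on the degree-zero part. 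Your sketch --- Hilbert--Schmidt vectors that are almost $P$-central and almost invariant under the Koopman representation of $\Lambda\curvearrowright(X,\mu_0)$ --- supplies no mechanism for either conclusion: reconciling the two approximate invariances is itself a weak-compactness-type statement that you do not prove (and which is not available without extra hypotheses such as weak amenability), and even granting it, almost $\Lambda$-invariance gives no control whatsoever on $\Phi(d\,u_k)$ for $k\notin\Lambda$, so nothing in the limit forces $\Phi=\Phi\circ E_0$ or the product form. Note that once such a $\Phi$ is available for an \emph{arbitrary} amenable intermediate $P$, your final paragraph gives $P=N$ in a few lines, so this unproved construction carries essentially the full weight of the theorem. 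A secondary, fixable issue: $\Phi$ is not normal while $p=\sum_{g\notin\Lambda}q_gu_g$ converges only in $\Vert\cdot\Vert_2$, so the term-by-term evaluation of $\Phi\bigl(p^*(1\otimes\iota(f))p\bigr)$ needs a truncation-plus-Cauchy--Schwarz justification that you do not give.

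The paper's proof shows that the rigid conditions (b) and (c) are unnecessary, and that is how the difficulty is bypassed. One takes any $A$-central state $\varphi$ on $B(L^2(Q)\otimes\ell^2\Gamma)$ with $\varphi|_M=\tau$; its restriction to $1\otimes\ell^\infty(\Gamma)\simeq C(\beta\Gamma)$ is automatically a $\Lambda$-invariant measure $\mu$ (by $\Lambda$-centrality), and singularity with respect to $X$ passes to $\beta\Gamma$ (Remark \ref{remsc}) --- this plays the role of your embedding $\iota:C(X)\to\ell^\infty(\Gamma)$. Then, instead of requiring the off-degree-zero terms to vanish exactly, one fixes a unitary $u\in A$, approximates $u^*$ in $\Vert\cdot\Vert_2$ by a finite sum $u_0=\sum_{g\in F}a_gv_g$, uses singularity and Urysohn to choose $f$ with $\varphi(f)\approx 1$ while $\mu(gV)$ is tiny for $g\in F\setminus\Lambda$, and controls the unwanted terms approximately by Cauchy--Schwarz, $\vert\varphi(a_g\sigma_g(f)v_gu)\vert\le\Vert a_g\Vert_\infty\,\mu(gV)^{1/2}$; comparing the two resulting estimates of $\varphi(u_0fu)$ gives $\Vert E_N(u)\Vert_2\ge 1-4\varepsilon$, hence $u\in N$. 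If you drop (b)--(c) and replace the exact degree-zero bookkeeping by this approximate control, your argument goes through using only property (a), which is the standard consequence of amenability.
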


The conclusion of the above theorem implies in particular that
\begin{itemize}
\item $L\Lambda$ is maximal amenable inside $L\Gamma$ (case where $Q = \C$);
\item for any free measure preserving action on a probability space $\Gamma \curvearrowright (Y,\nu)$, the orbit equivalence relation $\cR(\Lambda \curvearrowright (Y,\nu))$ is maximal hyperfinite inside $\cR(\Gamma \curvearrowright (Y,\nu))$ (case where $Q = L^\infty(Y,\nu)$).
\end{itemize}

\begin{lettercor}
In the following examples, $\Lambda$ is singular inside $\Gamma$, so that the conclusion of Theorem \ref{criterion} holds.
\begin{enumerate}[(1)]
\item $\Gamma$ is a hyperbolic group and $\Lambda$ is an infinite maximal amenable subgroup \cite{BC14};
\item $\Lambda$ is any amenable group with an infinite index subgroup $\Lambda_0$, and $\Gamma = \Lambda \ast_{\Lambda_0} \Lambda'$, for some other group $\Lambda'$ containing properly $\Lambda_0$;
\item $\Gamma = \SL_n(\Z)$ and $\Lambda$ is the subgroup of upper triangular matrices.
\end{enumerate}
\end{lettercor}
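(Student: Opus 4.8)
The plan is, in each of the three cases, to produce a compact $\Gamma$-space $X$ on which $\Lambda$ fixes a point $x_0$, to prove that $\delta_{x_0}$ is the \emph{only} element of $\Prob_\Lambda(X)$, and then to note that $g\cdot x_0\neq x_0$ whenever $g\notin\Lambda$, so that $g\cdot\delta_{x_0}=\delta_{gx_0}\perp\delta_{x_0}$ and singularity follows from the definition. In all three cases the mechanism for $\Prob_\Lambda(X)=\{\delta_{x_0}\}$ is the same: I exhibit a $\Lambda$-equivariant Borel partition $X=\{x_0\}\sqcup\bigsqcup_i B_i$ whose pieces either form a single \emph{infinite} $\Lambda$-orbit, or each carry a copy of $\Z<\Lambda$ acting by a translation-type map admitting no finite invariant measure; either way a $\Lambda$-invariant probability measure must give each $B_i$ mass $0$ and hence be concentrated on $x_0$.

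For case (1) I would take $X=\partial\Gamma$. Since $\Lambda$ is infinite and amenable it is virtually cyclic and contains a loxodromic element $h$ with fixed pair $\{h^+,h^-\}$, and the north--south dynamics of $h$ force any $\langle h\rangle$-invariant (hence any $\Lambda$-invariant) measure to sit on $\{h^+,h^-\}$; maximality of $\Lambda$ identifies it with the setwise stabiliser of $\{h^+,h^-\}$, and since in a hyperbolic group any element fixing $h^+$ lies in the elementary closure $E(h)=\Stab\{h^+,h^-\}$, one gets $g\{h^+,h^-\}\cap\{h^+,h^-\}=\varnothing$ for $g\notin\Lambda$. This is exactly what is proved in \cite{BC14}, which I would simply quote. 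For case (2) I would let $T$ be the Bass--Serre tree of $\Gamma=\Lambda\ast_{\Lambda_0}\Lambda'$, with $v_0$ the base vertex of stabiliser $\Lambda$ and edge stabiliser $\Lambda_0$, and take $X=\overline T$, the Roller compactification, realised as the closure of the vertex set inside $\{0,1\}^{\mathcal H}$ where $\mathcal H$ is the set of half-trees and a vertex $v$ maps to the indicator of $\{h\in\mathcal H:v\in h\}$; this is a compact $\Gamma$-space. Then $\overline T\setminus\{v_0\}$ is the disjoint union of the clopen shadows $S(e)$ of the edges $e$ issuing from $v_0$, of which there are $[\Lambda:\Lambda_0]=\infty$ in a single $\Lambda$-orbit, so every $\mu\in\Prob_\Lambda(\overline T)$ gives them all equal (hence zero) mass and $\mu=\delta_{v_0}$; and $gv_0\neq v_0$ for $g\notin\Lambda$ by Bass--Serre theory.

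For case (3), with $\Lambda=B$ the upper triangular subgroup of $\SL_n(\Z)$ (solvable, hence amenable), I would use the full flag variety $\cF=\cF(\R^n)$, a compact $\SL_n(\Z)$-space, with $x_0=F_0$ the standard flag, which $B$ fixes. The crux is $\Prob_B(\cF)=\{\delta_{F_0}\}$. First I would settle the one-step case $\mathbb{P}^{n-1}(\R)$: the $B$-invariant stratification $\mathbb{P}^{n-1}(\R)=\bigsqcup_{k=1}^{n}S_k$ with $S_k=\mathbb{P}(\langle e_1,\dots,e_k\rangle)\setminus\mathbb{P}(\langle e_1,\dots,e_{k-1}\rangle)\cong\R^{k-1}$ has the property that for $k\geq 2$ the unipotent matrix $I+E_{k-1,k}\in B$ acts on $S_k$, in the affine chart $x_k=1$, by the unit translation $(x_1,\dots,x_{k-1})\mapsto(x_1,\dots,x_{k-2},x_{k-1}+1)$, which has no finite invariant measure; hence $\mu(S_k)=0$ for $k\geq2$ and $\Prob_B(\mathbb{P}^{n-1}(\R))=\{\delta_{[e_1]}\}$. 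Then I would induct on $n$ along the $\SL_n$-equivariant fibration $\cF(\R^n)\to\mathbb{P}^{n-1}(\R)$, $F\mapsto F_1$: any $\mu\in\Prob_B(\cF(\R^n))$ pushes forward to $\delta_{[e_1]}$, so is carried by the fibre over $[e_1]$, which is canonically $\cF(\R^n/\langle e_1\rangle)$, and $B$ acts on it through its image, which surjects onto the upper triangular subgroup of $\SL_{n-1}(\Z)$; the inductive hypothesis then gives $\mu=\delta_{F_0}$. Finally $gF_0=F_0$ forces $g$ to be upper triangular, i.e.\ $g\in\Lambda$.

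The genuinely substantive step is case (3): cases (1) and (2) are essentially formal once the correct boundary is in hand — (1) is already in \cite{BC14}, and in (2) the only care needed is to choose a compactification of the (non-locally-finite) tree in which the shadow decomposition is an honest Borel partition. In (3), the work is to recognise $\cF(\R^n)$ as the right $\Gamma$-space and to compute $\Prob_B(\cF)$; I expect the stratification-plus-unit-translation argument on $\mathbb{P}^{n-1}$ followed by the fibration induction to be the cleanest route, the alternative of running the translation argument directly on all Bruhat cells of $\cF(\R^n)$ being possible but requiring an explicit choice of Schubert coordinates and thus less transparent.
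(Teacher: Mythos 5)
Your proposal is correct, and it diverges from the paper mainly in case (3). For (1) you do exactly what the paper does: it records (with proof, for completeness) the three facts you invoke — $\Lambda=\Stab_\Gamma(\{a,b\})$ for a fixed pair $\{a,b\}\subset\pG$, every $\Lambda$-invariant measure has the form $t\delta_a+(1-t)\delta_b$, and $g\cdot\{a,b\}\cap\{a,b\}=\emptyset$ for $g\notin\Lambda$ — and cites \cite{BC14}. For (2) your argument is the paper's argument (infinitely many disjoint ``shadows'' at the base vertex, permuted transitively by $\Lambda$, hence null, so the measure is $\delta_{v_0}$); the only difference is the compactification of the non-locally-finite Bass--Serre tree: the paper uses Bowditch's compact topology on $V(T)\cup\partial T$ with basic sets $M(x,A)$, you use the Roller compactification inside $\{0,1\}^{\mathcal{H}}$. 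Both make the shadows clopen with complement $\{v_0\}$, so this is an essentially cosmetic change (yours is a bit more self-contained; the paper outsources the topological bookkeeping to Bowditch/Ozawa). The genuine divergence is (3): the paper also works on $G/P$, i.e.\ the flag variety, but obtains uniqueness of the $\Lambda$-invariant measure in one stroke from Moore's theorem \cite{Mo79} — the support is pointwise fixed by the Zariski closure $N$ of $\Gamma\cap N$ — and then shows the $N$-fixed locus is $\{[P]\}$ via a spectrum/dimension argument and $N_G(N)=P$. Your route is elementary: the $\Lambda$-invariant stratification of $\mathbb{P}^{n-1}(\R)$, the fact that $I+E_{k-1,k}$ acts on the $k$-th stratum as a unit translation of $\R^{k-1}$ (hence kills any finite invariant measure), and induction along the equivariant fibration $\cF(\R^n)\to\mathbb{P}^{n-1}(\R)$. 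Both are complete proofs; the paper's is shorter and uses the same tool it redeploys for Theorem B (general lattices, where no explicit coordinates exist), while yours avoids all algebraic-group input (Moore, Borel--Tits) at the price of being specific to $\SL_n(\Z)$ and its standard flag. Two small points if you write this up: your introductory template (``$\delta_{x_0}$ is the unique invariant measure'') does not literally apply in case (1), where invariant measures live on the two-point set $\{a,b\}$ — your detailed paragraph handles this correctly — and in (3) the image of $\Lambda$ acting on the fibre over $[e_1]$ is the upper triangular subgroup of $\mathrm{GL}_{n-1}(\Z)$ (determinant $\pm 1$), which \emph{contains} the upper triangular subgroup of $\SL_{n-1}(\Z)$; containment is all the induction needs, so this is harmless.
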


Point \textit{(2)} above was proved independently by B. Leary \cite{Le14} for more general von Neumann algebras (not only group algebras).

Regarding the question of providing abelian, maximal amenable subalgebras in a given von Neumann algebra, we can prove the following. The result is not as explicit as the above examples, but it is quite general. We are grateful to Jesse Peterson for stimulating our interest in this question in the setting of lattices in Lie groups.

\begin{letterthm}
Consider a lattice $\Gamma$ in a connected semi-simple real Lie group $G$ with finite center. Then $\Gamma$ admits a singular subgroup which is virtually abelian.
\end{letterthm}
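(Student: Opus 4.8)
The plan is to take as the compact $\Gamma$-space the Furstenberg boundary $G/P$, with $P<G$ a minimal parabolic subgroup (so the $\Gamma$-action is continuous), and to build $\Lambda$ as the normalizer in $\Gamma$ of a maximal $\R$-split torus that is ``visible'' inside $\Gamma$. First I would invoke classical facts about lattices — the existence of $\R$-regular elements (Prasad--Rapinchuk) in the rank-one case, and arithmeticity together with the structure theory of maximal $\mathbb{Q}$-tori (or Prasad--Raghunathan) in higher rank — to fix a maximal $\R$-split torus $A<G$ and a subgroup $\Lambda_0<\Gamma$ that is a \emph{cocompact} lattice in $A$. In particular $\Lambda_0$ is Zariski dense in $A$ and meets every open Weyl chamber of $A$. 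Let $F\subset G/P$ be the finite set of flags fixed by $A$ (it has $|W|$ elements, $W$ the restricted Weyl group), and set $\Lambda:=\Gamma\cap N_G(A)$; note $\Lambda\supseteq\Lambda_0$ and $N_G(A)\cdot F=F$.

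Second, I would check that $\Lambda$ is virtually abelian. Indeed $\Lambda/(\Lambda\cap Z_G(A))$ embeds into $N_G(A)/Z_G(A)=W$, which is finite; and $Z_G(A)=MA$ with $M:=Z_K(A)$ compact, so the proper projection $Z_G(A)\to A$ carries $\Gamma\cap Z_G(A)$ onto a discrete, hence finitely generated free abelian, subgroup of $A(\R)$, with finite kernel $\Gamma\cap M$. Thus $\Gamma\cap Z_G(A)$, and hence $\Lambda$, is (finite)-by-(free abelian); in particular $\Lambda$ is amenable.

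The heart of the argument is singularity of $\Lambda$ with respect to $G/P$, which I would prove in two steps. Step (i): any $\mu\in\Prob_\Lambda(G/P)$ is supported on $F$. Picking $a_0\in\Lambda_0$ in an open Weyl chamber of $A$, a standard contraction argument (via the relative Bruhat decomposition) shows that $\psi(x):=\lim_{n\to\infty}a_0^{\,n}\cdot x$ exists for all $x\in G/P$, lies in $F=\Stab_{G/P}(a_0)$, and is Borel; since $\mu$ is $a_0$-invariant, dominated convergence gives $\mu=\psi_*\mu$, carried by $F$. Step (ii): if $g\in\Gamma$ and $g\cdot\eta_1=\eta_2$ with $\eta_1,\eta_2\in F$, then $g\in N_G(A)$. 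Indeed $A$ fixes $\eta_1$, so $gAg^{-1}$ fixes $\eta_2$, and $A$ also fixes $\eta_2$; hence $A$ and $gAg^{-1}$ are maximal $\R$-split tori of the minimal parabolic $P_{\eta_2}:=\Stab_G(\eta_2)$, so they are conjugate by some $u$ in the unipotent radical $N_2$ of $P_{\eta_2}$, and $g=un$ with $n=u^{-1}g\in N_G(A)$. Now choose $a\in\Lambda_0$ in the open Weyl chamber of $A$ for which $\eta_2$ is the attracting flag, so that conjugation by $a$ contracts $N_2$; with $a':=nan^{-1}\in A$ (commuting with $a$, both lying in $A$),
\[
a^{-m}\,(gag^{-1})\,a^{m}=\bigl(a^{-m}ua^{m}\bigr)\,a'\,\bigl(a^{-m}u^{-1}a^{m}\bigr)\ \longrightarrow\ a'\qquad(m\to\infty).
\]
The left-hand terms lie in the discrete group $\Gamma$ (each is a $\Gamma$-conjugate of $gag^{-1}\in\Gamma$), so the sequence is eventually equal to $a'$; comparing gives $gag^{-1}=a'$, i.e.\ $u\in Z_G(a')$. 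But $a'$ is semisimple, so $Z_G(a')$ is reductive and contains no nontrivial unipotent; hence $u=e$ and $g=n\in N_G(A)$. Steps (i) and (ii) together show: for $g\in\Gamma\setminus\Lambda$ one has $g\cdot F\cap F=\emptyset$, so any $\mu\in\Prob_\Lambda(G/P)$ and its translate $g\cdot\mu$ are carried by the disjoint sets $F$ and $g\cdot F$, whence $g\cdot\mu\perp\mu$. Thus $\Lambda$ is a virtually abelian singular subgroup of $\Gamma$.

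The main obstacle is step (ii), and it is precisely here that one needs the \emph{full} cocompact lattice $\Lambda_0$ inside $A$ rather than a single $\R$-regular element: the contraction-and-discreteness trick requires, for every $\eta_2\in F$, an element of $\Lambda_0\subset\Gamma$ that is regular and lies in the correct Weyl chamber of $A$. Supplying such a $\Lambda_0$ for an arbitrary lattice is exactly where the structure theory of semisimple groups and their arithmetic subgroups enters. It remains only to record the routine points used above — that $N_G(A)$ stabilises $F$, that $Z_G(A)=MA$ with $M$ compact, and that the map $\psi$ in (i) is Borel — and to note that the rank-one case is simply the special case $\dim A=1$ of this argument (there $|W|=2$, $\Lambda_0=\langle a_0\rangle$ for a loxodromic $a_0$, and $F$ is the pair of fixed points of $a_0$).
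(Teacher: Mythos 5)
Your overall architecture (the boundary $G/P$, showing any $\Lambda$-invariant measure is carried by the fixed-point set $F$ of $A$, then showing elements of $\Gamma$ moving $F$ into itself must normalize) is the same as the paper's, but there is a genuine gap at the foundation: you assume that, after conjugation, $\Gamma$ contains a subgroup $\Lambda_0$ which is a cocompact lattice in the maximal $\R$-split torus $A$ \emph{itself} --- in particular that, for every open Weyl chamber, $\Gamma$ contains a regular element lying in $A$. Neither Prasad--Rapinchuk ($\R$-regular elements) nor Prasad--Raghunathan gives this. What those results provide is a Cartan subgroup $H$ with $\Gamma\cap H$ cocompact in $H$; a maximally split Cartan subgroup lies in $Z_G(A)=MA$ but is strictly larger than $A$ whenever $M$ is infinite. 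An $\R$-regular element of $\Gamma$ is conjugate into $MA$ with regular $A$-part, but its compact ``rotation'' part $m\in M$ cannot in general be killed by passing to powers: already for $G=\SL_2(\C)$, where $M\cong U(1)$, a loxodromic element of $\Gamma$ is a screw motion whose rotation part need not have finite order, and then no power of it lies in any conjugate of $A$; nothing in the quoted results prevents $\Gamma\cap gAg^{-1}$ from being trivial for every $g$. Without such a $\Lambda_0$, your step (i) (which needs a regular $a_0\in\Gamma\cap A$) and step (ii) (which needs, for each $\eta_2\in F$, an element of $\Gamma\cap A$ in the corresponding chamber) have no input; the same issue affects your closing remark that in rank one $\Lambda_0=\langle a_0\rangle$, since a loxodromic element is only cocompact in its Cartan subgroup $\cong MA$, not in $A$.

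The paper's proof is organized precisely around this difficulty. It sets $\Lambda_0:=\Gamma\cap MA$, which \emph{is} a cocompact lattice in $MA$ by Prasad--Raghunathan, and $\Lambda:=N_\Gamma(\Lambda_0)$. In its Claim 1, the absence of elements of $\Gamma$ inside $A$ is compensated by averaging the $\Lambda_0$-invariant measure over the compact group $M$; the averaged measure is invariant under the projection of $\Lambda_0$ to $A$, which is a lattice in $A$, and Moore's proposition then forces the support into $F$. In its Claim 2, your contraction computation $a^{-m}(gag^{-1})a^{m}\to a'$ is replaced by a compactness/discreteness argument: by Zimmer's Proposition 8.2.4 there is a sequence $b_k\in MA$ with $b_k n b_k^{-1}\to e$, cocompactness of $\Lambda_0$ in $MA$ allows one to replace $b_k$ by lattice elements $c_j\in\Lambda_0$, and then $c_j\gamma c_j^{-1}\in\Gamma$ accumulates on $MA$, so discreteness of $\Gamma$ gives $\gamma\in\Gamma\cap MA=\Lambda_0$. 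Your step (ii) could be repaired along these lines (conjugation by elements of $MA$ with regular $A$-part still contracts the unipotent radical, since $M$ is compact, and one concludes via discreteness rather than via an exact limit in $A$), and virtual abelianness is then obtained as in the paper from finiteness of the Weyl group $N_G(A)/Z_G(A)$; but as written, the construction of $\Lambda_0$, and hence the proof, does not go through.
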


As we explain in Remark \ref{abelianexample}, if moreover $G$ has no compact factors and $\Gamma$ is torsion free and co-compact in $G$, then $\Gamma$ admits an abelian singular subgroup.

At this point, let us mention that all the former results on maximal amenability followed Popa's strategy of proving the maximal amenability of $Q \subset M$ by studying $Q$-central sequences in $M$. Namely the inclusion $Q \subset M$ was usually shown to satisfy the so-called ``asymptotic orthogonality property''. In contrast, our result relies on a new strategy, more specific to group von Neumann algebras, and completely different from Popa's approach.

The general idea in our approach is the following. Assume that $\Gamma$ acts on some compact space $X$. Then the maximal amenable subgroups of $\Gamma$ are stabilizers of probability measures on $X$. In non-commutative terms, one can more generally say that amenable subalgebras of $L\Gamma$ centralize states on the reduced C$^*$-algebraic crossed-product $C(X) \rtimes_r \Gamma$. The advantage of focusing our study on this crossed-product C$^*$-algebra is that it allows concrete computations. We will see at the end of this paper that this point of view also has a theoretical interest, providing new insight on solidity and strong solidity.

Finally, let us mention nice characterizations of singularity communicated to us by Narutaka Ozawa. We thank him for allowing us to include these characterizations here.

\begin{letterthm}[Ozawa]
\label{thmoz}
Consider an amenable subgroup $\Lambda$ of a discrete countable group $\Gamma$. The following are equivalent.
\begin{enumerate}
\item $\Lambda$ is a singular subgroup of $\Gamma$;
\item Every $\Lambda$-invariant state on $C^*_r(\Gamma)$ vanishes on $\Gamma \setminus \Lambda$;
\item For every $g \in \Gamma \setminus \Lambda$, we have that $0 \in \overline{\conv}^{\Vert \cdot \Vert}(\{\lambda(tgt^{-1}) \, , \, t \in \Lambda\}) \subset B(\ell^2\Gamma)$;
\item For any net $(\xi_n)$ of almost $\Lambda$-invariant unit vectors in $\ell^2\Gamma$ and all $g \in \Gamma \setminus \Lambda$, the inner product $\langle \lambda_g\xi_n,\xi_n\rangle$ goes to $0$.
\end{enumerate}
\end{letterthm}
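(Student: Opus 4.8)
The plan is to prove the three equivalences $(1)\Leftrightarrow(2)$, $(2)\Leftrightarrow(3)$ and $(3)\Leftrightarrow(4)$ separately, in each case isolating a soft direction and a substantial one. Throughout, for $g\in\Gamma\setminus\Lambda$ set $K_g:=\overline{\conv}^{\Vert\cdot\Vert}\{\lambda(tgt^{-1}):t\in\Lambda\}\subseteq C^*_r(\Gamma)$, a norm-closed convex set invariant under conjugation by $\lambda(\Lambda)$, so that $(3)$ reads ``$0\in K_g$ for every $g\notin\Lambda$''. The two easy implications are $(3)\Rightarrow(2)$ and $(3)\Rightarrow(4)$. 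For the first, a $\Lambda$-invariant state $\varphi$ is constant on the conjugation orbit of $\lambda_g$, hence (by norm continuity) on $K_g$; as $0\in K_g$ this value is $0$. For the second, if $(\xi_n)$ is almost $\Lambda$-invariant then $\langle\lambda(tgt^{-1})\xi_n,\xi_n\rangle=\langle\lambda_g\lambda_t^{*}\xi_n,\lambda_t^{*}\xi_n\rangle\to\langle\lambda_g\xi_n,\xi_n\rangle$ uniformly over $t$ in any finite set, so $\limsup_n|\langle\lambda_g\xi_n,\xi_n\rangle|\le\Vert c\Vert$ for every $c$ in the convex hull; taking $\Vert c\Vert$ arbitrarily small forces $\langle\lambda_g\xi_n,\xi_n\rangle\to0$.

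Next, $(2)\Rightarrow(3)$ is a Hahn--Banach argument. Suppose $0\notin K_g$. Separating $0$ from the closed convex set $K_g$ and complexifying gives $L\in C^*_r(\Gamma)^*$ with $\operatorname{Re}L\ge\delta>0$ on $K_g$. Averaging the functionals $L\circ\operatorname{Ad}(\lambda_t)$ over $\Lambda$ against an invariant mean (amenability) produces a $\Lambda$-invariant $L'$ still satisfying $\operatorname{Re}L'\ge\delta$ on $K_g$, since $\operatorname{Ad}(\lambda_t)$ preserves $K_g$; in particular $L'(\lambda_g)\neq0$. Decompose $L'$ into hermitian and anti-hermitian parts and each of those into positive and negative Jordan components: all four positive functionals so obtained are again $\Lambda$-invariant, by uniqueness of these decompositions and the fact that $\operatorname{Ad}(\lambda_t)^{*}$ is an isometric, positivity-preserving map. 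Since $L'(\lambda_g)\ne0$, at least one of them is non-zero at $\lambda_g$, and after normalisation it is a $\Lambda$-invariant state on $C^*_r(\Gamma)$ not vanishing at $\lambda_g$, contradicting $(2)$.

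For $(1)\Rightarrow(2)$ I would pass to the reduced crossed product. Fix a compact $\Gamma$-space $X$ witnessing singularity and a $\Lambda$-invariant state $\varphi$ on $C^*_r(\Gamma)$; since $C^*_r(\Gamma)$ embeds unitally in $C(X)\rtimes_r\Gamma$, extend $\varphi$ by Hahn--Banach and average over $\Lambda$ to obtain a $\Lambda$-invariant state $\tilde\varphi$ on $C(X)\rtimes_r\Gamma$ extending $\varphi$. Then $\mu:=\tilde\varphi|_{C(X)}\in\Prob_\Lambda(X)$, and in the GNS triple $(\pi,\mathcal{H},\eta)$ of $\tilde\varphi$ the vectors $\eta$ and $\pi(u_g)^{*}\eta$ represent, for the $C(X)$-action, the measures $\mu$ and $g^{-1}\mu$, while $\langle\eta,\pi(u_g)^{*}\eta\rangle=\tilde\varphi(u_g)$. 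Since $g^{-1}\mu\perp\mu$ and any two vectors representing mutually singular measures in a representation of $C(X)$ are orthogonal (approximate a separating Borel set by continuous functions), $\varphi(\lambda_g)=\tilde\varphi(u_g)=0$. The converse $(2)\Rightarrow(1)$ asks for an explicit witness: one shows $X:=\beta\Gamma$ (translation action) works; equivalently, given $\mu\in\Prob_\Lambda(\beta\Gamma)$ with $g\mu\not\perp\mu$ one must manufacture a $\Lambda$-invariant state on $\ell^{\infty}(\Gamma)\rtimes_r\Gamma$ ($\cong B(\ell^2\Gamma)$, via the free transitive translation action) restricting to $\mu$ on $\ell^{\infty}(\Gamma)$ and non-zero at $u_g=\lambda_g$, and then restrict to $C^*_r(\Gamma)$ to contradict $(2)$.

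The remaining implication $(4)\Rightarrow(3)$ (equivalently $(4)\Rightarrow(2)$), together with the construction just described for $(2)\Rightarrow(1)$, is the main obstacle: in both cases one has to upgrade an abstract non-vanishing statement about states into an honest geometric object --- a net of almost $\Lambda$-invariant vectors in $\ell^2\Gamma$, respectively a $\Lambda$-invariant measure on $\beta\Gamma$ with non-trivial overlap with its $g$-translate. I expect this to go through a Day--Namioka type argument: starting from a $\Lambda$-invariant state $\varphi$ with $\varphi(\lambda_g)\ne0$, extend it to $B(\ell^2\Gamma)$, approximate it weak-$*$ by normal states, and use amenability of $\Lambda$ together with Mazur's theorem in the predual $\mathcal{T}(\ell^2\Gamma)$ to replace the approximants by density operators $D_n$ with $\Vert\lambda_t^{*}D_n\lambda_t-D_n\Vert_1\to0$ and $\operatorname{Tr}(D_n\lambda_g)\not\to0$; the Powers--St\o rmer inequality then makes the purifications $D_n^{1/2}$ into almost $\Lambda$-invariant vectors detecting $\lambda_g$. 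Carrying this out while keeping control of the precise Hilbert space carrying the vectors --- and, on the measure side, of the Hellinger affinity, which is only upper semicontinuous under weak-$*$ limits --- is the delicate point; once $(4)\Rightarrow(3)$ is established the three equivalences close up.
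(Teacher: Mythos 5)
Your implications (1)$\Rightarrow$(2), (2)$\Rightarrow$(3), (3)$\Rightarrow$(2) and (3)$\Rightarrow$(4) are correct and essentially coincide with the paper's arguments (your GNS/crossed-product phrasing of (1)$\Rightarrow$(2) is just the paper's Cauchy--Schwarz computation with a $\Lambda$-invariant extension of the state, done in slightly different clothing). But the proposal has a genuine gap: nothing you actually prove implies (1). You have $(1)\Rightarrow(2)\Leftrightarrow(3)\Rightarrow(4)$, and the two return arrows you defer --- $(2)\Rightarrow(1)$ and $(4)\Rightarrow(3)$ --- are exactly where the content lies, and your sketched strategy for them does not go through as stated. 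For $(4)\Rightarrow(3)$ via density matrices: after the Day--Namioka step the purifications $D_n^{1/2}$ live in the Hilbert--Schmidt space, on which $\Lambda$ acts by \emph{conjugation} $A\mapsto \lambda_t A\lambda_t^*$, whereas condition (4) concerns unit vectors in $\ell^2\Gamma$ that are almost invariant under $\lambda_t$ itself; under the identification of $HS(\ell^2\Gamma)$ with a multiple of the regular representation, left multiplication by $\lambda_g$ does \emph{not} become $\lambda_g\otimes 1$, so $\operatorname{Tr}(D_n^{1/2}\lambda_g D_n^{1/2})\not\to 0$ does not contradict (4). More basically, $\Ad(\lambda_t)$-invariance of a state does not translate into $\lambda_t$-almost invariance of implementing vectors (every $\delta_h$ implements the conjugation-invariant trace on $L\Gamma$ but is nowhere near $\lambda$-invariant). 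Also, your parenthetical $\ell^\infty(\Gamma)\rtimes_r\Gamma\cong B(\ell^2\Gamma)$ is false at the C$^*$-level (this reduced crossed product is the uniform Roe algebra; only the von Neumann crossed product is $B(\ell^2\Gamma)$).

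The paper closes the cycle with a single return arrow, the contrapositive of $(4)\Rightarrow(1)$, carried out entirely inside the commutative algebra $\ell^\infty(\Gamma)$, which is what makes the purification painless: if $\Lambda$ is not singular (recall, by the paper's Remark on $\beta\Gamma$, singularity may be tested on $\beta\Gamma$), there is a $\Lambda$-invariant state $\varphi$ on $\ell^\infty(\Gamma)$ and $g\notin\Lambda$ with $\Vert\varphi-g\cdot\varphi\Vert<2$; approximating $\varphi$ by normal states and applying the Hahn--Banach/Mazur convexity trick in $\ell^1(\Gamma)$ yields positive norm-one $\eta_n\in\ell^1(\Gamma)$, asymptotically invariant under left translation by $\Lambda$, with $\Vert\eta_n-g\cdot\eta_n\Vert_1\to\Vert\varphi-g\cdot\varphi\Vert<2$. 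Then $\xi_n:=\eta_n^{1/2}$ are honest unit vectors in $\ell^2\Gamma$, almost $\lambda(\Lambda)$-invariant by the pointwise inequality $\vert\sqrt a-\sqrt b\vert^2\le\vert a-b\vert$, and $\langle\lambda_g\xi_n,\xi_n\rangle=1-\tfrac12\Vert\xi_n-\lambda_g\xi_n\Vert_2^2\ge 1-\tfrac12\Vert\eta_n-g\cdot\eta_n\Vert_1$, which stays bounded away from $0$; this negates (4). Note that with this one implication you need neither a separate proof of $(2)\Rightarrow(1)$ nor any control of Hellinger affinities under weak-$*$ limits: the $\ell^1\to\ell^2$ square-root argument keeps the vectors in $\ell^2\Gamma$ with the correct (left-translation) almost invariance, which is precisely what your Hilbert--Schmidt route loses.
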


Note that the last characterization is in the spirit of Popa's Asymptotic Orthogonality Property.

\subsection*{Acknowledgements} We first warmly thank Cyril Houdayer for important comments on an earlier version of this paper. We are grateful to Narutaka Ozawa for showing Theorem \ref{thmoz} to us and to Stefaan Vaes for providing us with a shortcut in the proof of Proposition \ref{centralizer}. We also thank Jean François Quint for an helpful discussion about lattices in algebraic groups and Adrian Ioana, Jesse Peterson and Sorin Popa for various discussions and remarks. We also thank the referees for their useful comments on the presentation and on Proposition \ref{centralizer}.

%%%%%
\section*{Notations and conventions}

All von Neumann algebras considered in the paper will be assumed to be separable. When a finite von Neumann algebra $M$ is being considered, we will denote by $\tau$ a faithful normal trace on it. In this case, $\Vert x \Vert_2 = \tau(x^*x)^{1/2}$ stands for the $2$-norm of an element $x \in M$, while $\Vert x \Vert_\infty$ denotes its operator norm. Also we denote by $L^2(M,\tau)$ (or sometimes just $L^2(M)$) the Hilbert space obtained by completion of $M$ with respect to the $2$-norm. When we want to view an element $x$ of $M$ inside $L^2(M)$, we will write $\hat x$. The left multiplication of $M$ on itself extends to a normal representation of $M$ on $L^2(M)$. Finally, denote by $J:L^2(M) \to L^2(M)$, the canonical anti-unitary that extends the map $x \in M \mapsto x^* \in M$.

If $M$ is a finite von Neumann algebra, and $N$ is a von Neumann subalgebra of $M$, we denote by $E_N$ the trace preserving conditional expectation.

If $\Gamma$ is a discrete countable group, we denote by $L\Gamma$ the associated von Neumann algebra; that is, the von Neumann subalgebra in $B(\ell^2\Gamma)$ generated by the left regular representation of $\Gamma$. Its commutant, $R\Gamma$, is also the von Neumann algebra generated by the right regular representation. Both $L\Gamma$ and $R\Gamma$ are finite and we choose $\tau$ to be the canonical trace on them, defined by $\tau(x) = \langle x\delta_e,\delta_e\rangle$ for all $x \in L\Gamma \subset B(\ell^2\Gamma)$ (here $\delta_e \in \ell^2\Gamma$ denotes the Dirac function at the identity element $e$ of the group $\Gamma$). In this case, we have that $L^2(L\Gamma,\tau) \simeq \ell^2(\Gamma)$ and $R\Gamma = J(L\Gamma)J$.

%%%%%
\section{Singular subgroups}

In this section, we prove Theorem \ref{criterion}. Let us first recall the definition of a singular subgroup.

\begin{defn}
Consider an amenable subgroup $\Lambda$ of a discrete countable group $\Gamma$. Suppose that $\Gamma$ acts continuously on the compact space $X$. We say that $\Lambda$ is \textit{\defin} in $\Gamma$ (with respect to $X$) if for any $\mu \in \Prob_\Lambda(X)$ and $g \in\Gamma\setminus \Lambda$, we have $g \cdot \mu \perp \mu$.  
\end{defn}

To illustrate this definition, let us mention the following trivial observation.

\begin{lem}
\label{groupmaxamen}
Consider a subgroup $\Lambda$ of a discrete countable group $\Gamma$. Then $\Lambda$ is maximal amenable inside $\Gamma$ if and only if there exists a continuous action $\Gamma \curvearrowright X$ on a compact space $X$ such that for any $\mu \in \Prob_\Lambda(X)$ and $g \in\Gamma\setminus \Lambda$, we have $g \cdot \mu \neq \mu$.
\end{lem}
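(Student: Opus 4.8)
The plan is to prove Lemma \ref{groupmaxamen} by exhibiting a canonical compact $\Gamma$-space for which invariance of a probability measure under an element $g$ forces $g$ to normalize (indeed, belong to) a given amenable subgroup, and conversely. A natural candidate is the space $X = \mathrm{Sub}(\Gamma)$ of subgroups of $\Gamma$ with the Chabauty topology (a closed, hence compact, subset of $\{0,1\}^\Gamma$), on which $\Gamma$ acts continuously by conjugation; alternatively, and more in the spirit of the rest of the paper, one can use the compact space of means on $\ell^\infty(\Gamma)$ with the weak$^*$ topology, i.e.\ $X = \overline{\Gamma}^{\,w^*} \subset (\ell^\infty\Gamma)^*_1$, or simply $X$ a point-compactification adapted to $\Lambda$. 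I will use the Chabauty space, since it makes the bookkeeping transparent.

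First, for the ``only if'' direction, assume $\Lambda$ is maximal amenable in $\Gamma$. Take $X = \mathrm{Sub}(\Gamma)$ with the conjugation action. Since $\Lambda$ is amenable, the point $\delta_\Lambda \in X$ (the Dirac mass at the subgroup $\Lambda$) is $\Lambda$-invariant, because $\Lambda$ normalizes itself; so $\Prob_\Lambda(X) \neq \emptyset$. Now suppose $g \in \Gamma \setminus \Lambda$ and $g \cdot \mu = \mu$ for some $\mu \in \Prob_\Lambda(X)$ — I must derive a contradiction. The issue is that a generic $\Lambda$-invariant measure on $\mathrm{Sub}(\Gamma)$ need not be supported on subgroups related to $\Lambda$, so the bare Chabauty space is too big; I would instead restrict to the $\Gamma$-invariant closed subspace $X_0 = \overline{\Gamma \cdot \delta_\Lambda}$, the orbit closure of $\Lambda$, whose points are limits of conjugates $t_n \Lambda t_n^{-1}$. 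Better still: replace $\mathrm{Sub}(\Gamma)$ by the set of \emph{amenable} subgroups containing a conjugate of $\Lambda$ — but amenability is not a closed condition in Chabauty topology in general. This is the main obstacle, and the cleanest fix is the following: use instead the compact $\Gamma$-space $X = \mathrm{Prob}(\mathrm{Sub}(\Gamma))$ is no help; rather, observe that maximal amenability of $\Lambda$ is equivalent to: the subgroup $\langle \Lambda, g\rangle$ is non-amenable for every $g \notin \Lambda$. So I want a $\Gamma$-space detecting exactly this.

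Here is the approach I would actually carry out. Let $X$ be the compact space of \emph{states} (means) on $\ell^\infty(\Gamma)$, with weak$^*$ topology, on which $\Gamma$ acts by left translation; this is the setting the paper favors. Since $\Lambda$ is amenable, $\Prob_\Lambda(X)$ contains the Dirac mass $\delta_m$ at any $\Lambda$-invariant mean $m$ on $\ell^\infty(\Gamma)$. If $g \cdot \delta_m = \delta_m$, then $g \cdot m = m$, so $m$ is invariant under the group $\langle \Lambda, g \rangle$; the existence of an invariant mean on $\ell^\infty(\Gamma)$ for a subgroup $H \leq \Gamma$ is equivalent to amenability of $H$ (since $\ell^\infty(\Gamma)$ contains $\ell^\infty(\Gamma/H)$ and more to the point $H$ acts on $\Gamma$ with $H$-action free, so an $H$-invariant mean on $\ell^\infty(\Gamma)$ restricts to give an $H$-invariant mean on $H$ via any section). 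Hence $\langle \Lambda, g\rangle$ is amenable, contradicting maximality of $\Lambda$ unless $g \in \Lambda$. Thus for this $X$, $g \cdot \mu = \mu$ with $\mu \in \Prob_\Lambda(X)$ forces $g \in \Lambda$; note we only needed to test Dirac measures, but the statement as phrased quantifies over all $\mu \in \Prob_\Lambda(X)$, so I should check the general case: if $g \cdot \mu = \mu$ for a general $\Lambda$-invariant $\mu$ on $X$, disintegrate or simply take the barycenter $\bar m = \int_X m \, d\mu(m) \in X$, which is again a mean and is fixed by $g$ and invariant under $\Lambda$, reducing to the Dirac case. This gives the required action.

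Conversely, suppose such an action $\Gamma \curvearrowright X$ exists. Let $H$ be any amenable subgroup with $\Lambda \leq H \leq \Gamma$; I must show $H = \Lambda$. Since $H$ is amenable and acts continuously on the compact space $X$, there exists an $H$-invariant probability measure $\mu \in \Prob_H(X) \subseteq \Prob_\Lambda(X)$. For each $g \in H$ we have $g \cdot \mu = \mu$ with $\mu \in \Prob_\Lambda(X)$, so by hypothesis $g \in \Lambda$; hence $H \subseteq \Lambda$, i.e.\ $H = \Lambda$. Combined with the amenability of $\Lambda$ (part of the hypothesis), this shows $\Lambda$ is maximal amenable, completing the proof. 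I expect the only genuinely delicate point to be the reduction from a general $\Lambda$-invariant measure to a Dirac/barycenter in the first direction, and the verification that an $H$-invariant mean on $\ell^\infty(\Gamma)$ (as opposed to on $\ell^\infty(H)$) really does force amenability of $H$ — both are standard but worth stating carefully; everything else is a direct unwinding of the amenability-implies-invariant-measure principle.
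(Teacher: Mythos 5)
Your final argument (the state-space-of-means construction; the Chabauty detour is abandoned and harmless) is correct and essentially the paper's own proof: the ``if'' direction is identical, and for the ``only if'' direction the paper takes $X=\beta\Gamma$ and excludes a jointly $\langle\Lambda,g\rangle$-invariant measure by pushing it to an auxiliary space witnessing non-amenability of $\langle\Lambda,g\rangle$, which is the same standard fact (an $\langle\Lambda,g\rangle$-invariant mean on $\ell^\infty(\Gamma)$ forces $\langle\Lambda,g\rangle$ amenable) that you invoke. Your only deviation is to work on the state space of $\ell^\infty(\Gamma)$, i.e.\ $\Prob(\beta\Gamma)$, which is what costs you the barycenter reduction; taking $X=\beta\Gamma$ directly removes that step, and note that (exactly as in the paper's proof) the ``if'' direction tacitly needs $\Lambda$ amenable, a point you correctly flag.
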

\begin{proof}
If such a space $X$ exists, then $\Lambda$ is clearly maximal amenable.

Conversely, assume that $\Lambda$ is maximal amenable. For any $g \in\Gamma\setminus \Lambda$, the group $\langle \Lambda,g \rangle$ is not amenable: there exists a compact $\Gamma$-space $X_g$ such that $\Prob_{\langle \Lambda,g \rangle}(X_g) = \emptyset$. Replacing $X_g$ by a minimal $\Gamma$-invariant subset if necessary, we can assume that $X_g$ is a quotient of the Stone-Czech compactification $\beta\Gamma$ of $\Gamma$. 

Then we see that $X = \beta\Gamma$ does the job: if $\mu \in \Prob_\Lambda(X)$ then its push forward on $X_g$ is $\Lambda$-invariant, so it is not $g$-invariant by definition of $X_g$. Therefore $\mu$ is not $g$-invariant.
\end{proof}

\begin{rem}
\label{remsc}
  Observe that if $\Lambda$ is singular in $\Gamma$ with respect to the compact space $X$, then it is singular with respect to any closed $\Gamma$-invariant subset of $X$. Hence if $\Lambda$ is singular in $\Gamma$, it is also singular with respect to a minimal compact space. Therefore, arguing as in the above proof, we can see that an amenable group $\Lambda$ is singular inside $\Gamma$ with respect to some compact $\Gamma$-space $X$ if and only if it is singular with respect to the Stone-Czech compactification $\beta\Gamma$ of $\Gamma$.
\end{rem}

\begin{thm}[Theorem \ref{criterion}]
\label{criterionbis}
Suppose that $\Gamma$ is a discrete countable group admitting an amenable, singular subgroup $\Lambda$. Then for any trace preserving action $\Gamma \curvearrowright (Q,\tau)$ on a finite amenable von Neumann algebra, $Q \rtimes \Lambda$ is maximal amenable inside $Q \rtimes \Gamma$.
\end{thm}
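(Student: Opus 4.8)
The plan is to argue by contradiction: suppose $Q \rtimes \Lambda \subsetneq P$ for some amenable von Neumann subalgebra $P \subset Q \rtimes \Gamma$, and derive a contradiction with singularity of $\Lambda$. The starting observation, which makes the C$^*$-algebraic viewpoint enter, is that since $Q \rtimes \Gamma$ acts on $\ell^2(X \times \Gamma)$-type spaces, amenability of $P$ gives a $P$-central state $\varphi$ on $B(\ell^2\Gamma) \ootimes Q$, and more usefully one gets a $P$-central state on the reduced crossed product $C(X) \rtimes_r \Gamma$ tensored appropriately with $Q$; here $X$ is a compact $\Gamma$-space witnessing singularity (by Remark \ref{remsc} one may take $X = \beta\Gamma$). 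The key point is that such a state, restricted to the copy of $C(X)$, produces (via disintegration over $Q$ and over the group) a $\Lambda$-invariant probability measure $\mu$ on $X$, because $P \supset Q \rtimes \Lambda$ and in particular $P$ contains the unitaries $u_t$, $t \in \Lambda$, which act on $C(X) \rtimes_r \Gamma$ implementing the $\Lambda$-action on $X$.

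The heart of the argument is then the following: take $g \in \Gamma \setminus \Lambda$ and the canonical unitary $u_g \in Q \rtimes \Gamma$. Since $Q\rtimes\Lambda \subsetneq P$, one wants to find an element $x \in P \ominus (Q \rtimes \Lambda)$ and exploit $P$-centrality of $\varphi$ against $x$. Concretely, I would run a Fourier-expansion / Dixmier-averaging argument: using amenability of $P$, average the operator $u_g$ (or rather $\lambda_g$ viewed inside the crossed product) over a F\o lner-type net coming from $P$; $P$-centrality forces the averaged quantity to be detected by $\varphi$. On the other hand, the singularity hypothesis says precisely that $g \cdot \mu \perp \mu$, and an elementary computation in $C(X) \rtimes_r \Gamma$ — writing the state $\varphi$ on products $f_1 \, \lambda_g \, f_2$ with $f_i \in C(X)$ in terms of the measure $\mu$ and the mutually singular measure $g\cdot\mu$ — shows that the contributions of all ``off-diagonal'' ($g \notin \Lambda$) Fourier coefficients must vanish. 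Combining these two facts: any $x \in P$ has, after averaging, only Fourier coefficients supported on $\Lambda$ as seen through $\varphi$, and a faithfulness/positivity argument (the conditional expectation $E_{Q \rtimes \Lambda}$ is trace-preserving, and $\varphi$ can be arranged to dominate a multiple of $\tau$ on the relevant subalgebra) upgrades this to $x \in Q \rtimes \Lambda$, contradicting $Q \rtimes \Lambda \subsetneq P$.

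In more detail, the chain of reductions I expect is: (i) reduce to $P$ with $Q \rtimes \Lambda \subset P \subset Q\rtimes\Gamma$ and $P$ amenable, and produce the $P$-central state $\varphi$ on the appropriate $\mathrm{C}^*$-algebra containing both $C(X) \rtimes_r \Gamma$ and $Q$; (ii) disintegrate to extract $\mu \in \Prob_\Lambda(X)$ from $\varphi|_{C(X)}$; (iii) for fixed $g \in \Gamma \setminus \Lambda$, pick $a \in Q\rtimes\Gamma$ with $E_{Q\rtimes\Lambda}(a) = 0$ but $\tau(a^* u_g b) \neq 0$ for some $b$, and compute $\varphi$ on commutators $[\,\cdot\,, a]$ to see that the $u_g$-component is killed by $g\cdot\mu \perp \mu$; (iv) conclude $P \subseteq Q \rtimes \Lambda$.

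The main obstacle, I expect, is step (iii): making precise how the $P$-central state $\varphi$ ``sees'' the Fourier coefficients of elements of $P$, and how the singularity condition $g\cdot\mu\perp\mu$ translates into a genuine vanishing rather than just a smallness estimate. The delicate technical point is that $\varphi$ lives on a large C$^*$-algebra (with $Q$ entering via $Q \ootimes Q^{\op}$ or $B(L^2Q)$), so one must carefully set up the representation so that $C(X) \rtimes_r \Gamma$ embeds in a way compatible with the $\Gamma$-action on $Q$, and check that $\Lambda$-invariance of $\mu$ survives the disintegration over the amenable part $Q$. Handling the $Q$-coefficient requires using amenability of $Q$ (to absorb it into the state) — this is where the hypothesis that $Q$ is amenable is essential, and isolating that use cleanly is the crux of the proof.
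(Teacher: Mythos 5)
Your setup is the same as the paper's: amenability of the intermediate algebra $P$ gives a $P$-central state $\varphi$ on $B(L^2(Q)\otimes\ell^2\Gamma)$ that restricts to the trace on $Q\rtimes\Gamma$, and since $P$ contains the canonical unitaries $v_t$, $t\in\Lambda$, the restriction of $\varphi$ to $1\otimes\ell^\infty(\Gamma)\simeq C(\beta\Gamma)$ is a $\Lambda$-invariant measure $\mu$, to which singularity (via Remark \ref{remsc}) applies. But the heart of the proof --- your step (iii) --- is missing, and the mechanisms you sketch for it would not work as stated. Averaging $u_g$ over ``a F\o lner-type net coming from $P$'' is not a defined operation and plays no role in any actual argument; the value $\varphi(f_1\lambda_g f_2)$ is \emph{not} computable from $\mu$ and $g\cdot\mu$ alone (a state on a crossed product is not determined by its restriction to $C(X)$); and testing $\varphi$ on commutators with an element $a$ chosen so that $E_{Q\rtimes\Lambda}(a)=0$ uses $P$-centrality only if $a\in P$, at which point you have no Fourier control over $a$ --- this is exactly the circularity you flag as the ``main obstacle''.

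The missing idea is a direct two-sided estimate on a single quantity, applied to a \emph{unitary} $u\in\cU(P)$, with no vanishing statement and no Dixmier averaging needed. Approximate $u^*$ in $\Vert\cdot\Vert_2$ by a finite Fourier sum $u_0=\sum_{g\in F}a_gv_g$; by singularity choose, for given $\varepsilon$, a positive $f\in C(\beta\Gamma)$ with $\varphi(f)\geq\mu(K)>1-\varepsilon$ while $\mu(gV)<(\varepsilon/(|F|\,\Vert a_g\Vert_\infty))^2$ for $g\in F\setminus\Lambda$ (where $K\subset V$ and $f$ is a Urysohn function supported on $V$). Centrality of $\varphi$ under $u$ plus $\varphi|_M=\tau$ and Cauchy--Schwarz give $\vert\varphi(u_0fu)\vert=\vert\varphi(uu_0f)\vert>1-2\varepsilon$; on the other hand, expanding $u_0$ and using $v_gfv_g^*=\sigma_g(f)$ together with Cauchy--Schwarz bounds each term with $g\notin\Lambda$ by $\Vert a_g\Vert_\infty\,\mu(gV)^{1/2}$, yielding $\vert\varphi(u_0fu)\vert\leq\Vert E_{Q\rtimes\Lambda}(u)\Vert_2+2\varepsilon$. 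Hence $\Vert E_{Q\rtimes\Lambda}(u)\Vert_2\geq1-4\varepsilon$ for all $\varepsilon$, so $u\in Q\rtimes\Lambda$; the quantitative estimate on unitaries replaces the ``faithfulness/positivity upgrade'' you were hoping for, and a smallness estimate is all that is required. Note also that amenability of $Q$ is not used to ``absorb the $Q$-coefficients into the state'': those are handled simply because $1\otimes\ell^\infty(\Gamma)$ commutes with $Q$ and by the bound $\Vert a_g\Vert_\infty$; the amenability of $Q$ only guarantees that $Q\rtimes\Lambda$ is itself amenable, so that the conclusion is genuinely a maximal amenability statement.
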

\begin{proof}
We will denote by $M := Q \rtimes \Gamma \subset B(L^2(Q) \otimes \ell^2\Gamma)$ and by $N := Q \rtimes \Lambda$. Consider an intermediate amenable von Neumann algebra $N \subset A \subset M$. We will show that $A = N$.

Suppose that $\Lambda$ is \defin in $\Gamma$.
Since $A$ is amenable, there exists an $A$-central state $\varphi : B(L^2(Q) \otimes \ell^2\Gamma) \to \C$ whose restriction to $M$ coincides with the trace.

Take a unitary $u \in \cU(A)$. Using the fact that $u$ centralizes the state $\varphi$, we will show that $u \in N$. 

Fix $\varepsilon >0$. We denote by $\{v_g\}_{g\in\Gamma}$ the canonical unitaries implementing the action. Then  
by density, one can find $u_0 \in M$ of the form $u_0 = \sum_{g \in F} a_gv_g$, with $F\subset \Gamma$ finite and non-zero elements $a_g \in Q$ for all $g \in F$, such that $\Vert u^* - u_0 \Vert_2 < \varepsilon$.

We observe that $1 \otimes \ell^\infty(\Gamma) \simeq C(\beta\Gamma)$. With this identification, any element $f\in C(\beta\Gamma)$ commutes with $Q$, and we have that $v_gfv_g^* = \sigma_g(f)$ for all $g \in \Gamma$, where we denote by $\sigma_g$ the action induced by the canonical action of $\Gamma$ on $\beta\Gamma$. Since $\varphi$ is $\Lambda$-central, its restriction to $C(\beta\Gamma)$ is given by a $\Lambda$-invariant regular Borel probability measure $\mu$ on $\beta\Gamma$ such that $\varphi(f) = \int_{\beta\Gamma} fd\mu$, for all $f \in C(\beta\Gamma)$. By hypothesis $\Lambda$ is singular in $\Gamma$ and by Remark \ref{remsc}, it is singular with respect to $\beta\Gamma$. Hence for all $g \in F \setminus \Lambda$, the measures $\mu$ and $(g^{-1}\cdot \mu)$ are singular with respect to each other.

So there exist a compact set $K\subset\beta\Gamma$ and an open set $V\subset\beta\Gamma$ containing $K$ such that:
\begin{itemize}
\item $\mu(K) > 1 - \varepsilon$;
\item $\mu(gV) < (\varepsilon/(|F|\Vert a_g \Vert_\infty))^2$, for all $g \in F \setminus \Lambda$.
\end{itemize}
Urysohn's Lemma gives us a continuous function $f \in C(\beta\Gamma)$ supported on $V$ such that $0 \leq f \leq 1$ and $f = 1$ on $K$. 

On the one hand, since $\varphi$ is $u$-central, one derives 
\begin{align*}
\vert \varphi(u_0fu) \vert & = \vert \varphi(uu_0f) \vert \\
& \geq \vert \varphi(f) \vert - \vert \varphi((uu_0 -1)f) \vert \\
& \geq \mu(K) - \Vert u_0 - u^* \Vert_2\\
& > 1 - 2\varepsilon.
\end{align*}
For the third line above we used Cauchy-Schwarz inequality and the fact that $\varphi_{\vert M} = \tau$.

On the other hand, one computes
\begin{align*}
\vert \varphi(u_0fu) \vert  & \leq  \left\vert \sum_{g \in F \cap \Lambda} \varphi(a_gv_gfu) \right\vert + \sum_{g \in F \setminus \Lambda} \vert \varphi(a_g\sigma_g(f)v_gu) \vert \\
& \leq \vert \varphi(E_N(u_0)fu) \vert +  \sum_{g \in F \setminus \Lambda} \varphi(a_g\sigma_g(ff^*)a_g^*)^{1/2} \\
& \leq \Vert E_N(u_0) \Vert_2 +  \sum_{g \in F \setminus \Lambda} \Vert a_g \Vert_\infty \mu(gV)^{1/2}\\
& < (\Vert E_N(u) \Vert_2 + \varepsilon) + \varepsilon.
\end{align*}
Altogether, we get
\[ 1 - 2\varepsilon < \vert \varphi(u_0fu) \vert < \Vert E_N(u) \Vert_2 + 2 \varepsilon,\]
so that $\Vert E_N(u) \Vert_2 \geq 1 - 4 \varepsilon$. Since $\varepsilon$ was arbitrary, this implies that $u \in N$.
\end{proof}

\begin{rem} The above proof actually shows that any unitary $u \in L\Gamma$ which centralizes the C$^*$-algebra generated by $Q \rtimes_r \Gamma$ and $1 \otimes \ell^\infty(\Gamma)$ has to be contained in $N$. Note that this C$^*$-algebra is isomorphic to the crossed-product $(Q \otimes_r C(\beta\Gamma)) \rtimes_r \Gamma$ (where $\Gamma$ acts diagonally on $Q \otimes_r C(\beta\Gamma)$). We will present several applications of this point of view in Section \ref{sectionstabilizer}.
\end{rem}

Before actually providing examples of singular subgroups, let us prove Theorem \ref{thmoz}. 

\begin{thm}[Ozawa]
Consider an amenable subgroup $\Lambda$ of a discrete countable group $\Gamma$. The following are equivalent.
\begin{enumerate}
\item $\Lambda$ is a singular subgroup of $\Gamma$;
\item Every $\Lambda$-invariant state on $C^*_r(\Gamma)$ vanishes on $\lambda(\Gamma \setminus \Lambda)$;
\item For every $g \in \Gamma \setminus \Lambda$, we have that $0 \in \overline{\conv}^{\Vert \cdot \Vert}(\{\lambda(tgt^{-1}) \, , \, t \in \Lambda\}) \subset B(\ell^2\Gamma)$;
\item For any net $(\xi_n)$ of almost $\Lambda$-invariant unit vectors in $\ell^2\Gamma$ and all $g \in \Gamma \setminus \Lambda$, the inner product $\langle \lambda_g\xi_n,\xi_n\rangle$ goes to $0$.
\end{enumerate}
\end{thm}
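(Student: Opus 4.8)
The plan is to prove the cycle $(1)\Rightarrow(2)\Rightarrow(3)\Rightarrow(4)\Rightarrow(1)$, the only non‑soft step being the last one. For $(1)\Rightarrow(2)$: given a $\Lambda$-invariant state $\psi$ on $C^*_r(\Gamma)$, extend it by Hahn--Banach and average it over a Følner sequence of the amenable group $\Lambda$ to obtain a $\Lambda$-invariant state $\Phi$ on $B(\ell^2\Gamma)$ with $\Phi|_{C^*_r(\Gamma)}=\psi$; as in the proof of Theorem~\ref{criterion}, the restriction of $\Phi$ to the copy of $\ell^\infty(\Gamma)$ inside $B(\ell^2\Gamma)$ is given by a measure $\mu\in\Prob_\Lambda(\beta\Gamma)$. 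Fix $g\in\Gamma\setminus\Lambda$. Singularity gives $g\cdot\mu\perp\mu$, and since $\beta\Gamma$ is zero-dimensional there is a clopen $B\subseteq\Gamma$ with $\mu(B)>1-\varepsilon$ and $(g\cdot\mu)(B)<\varepsilon$; writing $e=1_B$ and applying the Cauchy--Schwarz inequality for $\Phi$ to $(1-e)\lambda_g$ and to $\lambda_g\cdot(\lambda_g^*e\lambda_g)$ bounds $|\psi(\lambda_g)|=|\Phi(\lambda_g)|$ by $2\sqrt\varepsilon$, so $\psi(\lambda_g)=0$. This is essentially the estimate already carried out in the proof of Theorem~\ref{criterion}.

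The implications $(2)\Rightarrow(3)$ and $(3)\Rightarrow(4)$ are elementary. For $(2)\Rightarrow(3)$, argue by contraposition: if $C:=\overline{\conv}^{\Vert\cdot\Vert}\{\lambda(tgt^{-1}):t\in\Lambda\}\subseteq C^*_r(\Gamma)$ does not contain $0$ for some $g\notin\Lambda$, then $C$ is $\Ad\lambda_t$-invariant and, since $g^{-1}\notin\Lambda$, satisfies $C^*=C$, so Hahn--Banach yields a self-adjoint bounded functional $\varphi$ on $C^*_r(\Gamma)$ with $\Re\varphi\ge\delta>0$ on $C$; averaging $\varphi$ over a Følner sequence of $\Lambda$ gives a $\Lambda$-invariant self-adjoint $\bar\varphi$ with $\Re\bar\varphi(\lambda_g)\ge\delta$, and by uniqueness of the Jordan decomposition $\bar\varphi=\bar\varphi_+-\bar\varphi_-$ both parts are $\Lambda$-invariant, so one of them, normalised, is a $\Lambda$-invariant state on $C^*_r(\Gamma)$ which does not vanish at $\lambda_g$ --- contradicting $(2)$. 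For $(3)\Rightarrow(4)$: if $(\xi_n)$ is a net of almost $\Lambda$-invariant unit vectors then $\langle\lambda(tgt^{-1})\xi_n,\xi_n\rangle=\langle\lambda_g\lambda_{t^{-1}}\xi_n,\lambda_{t^{-1}}\xi_n\rangle$ is within $o(1)$ of $\langle\lambda_g\xi_n,\xi_n\rangle$, so choosing by $(3)$ a convex combination of the $\lambda(tgt^{-1})$ of arbitrarily small norm forces $\langle\lambda_g\xi_n,\xi_n\rangle\to0$.

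The substantial implication is $(4)\Rightarrow(1)$, again by contraposition. Assume $\Lambda$ is not singular and fix $\mu\in\Prob_\Lambda(\beta\Gamma)$ and $g\in\Gamma\setminus\Lambda$ with $g\cdot\mu$ not singular to $\mu$. Choose a $\Gamma$-quasi-invariant probability measure $\eta$ on $\beta\Gamma$ dominating all translates of $\mu$ --- for instance $\eta=\sum_{h\Lambda\in\Gamma/\Lambda}c_{h\Lambda}\,(h\cdot\mu)$ with summable positive weights, which is legitimate because $h\cdot\mu$ depends only on the coset $h\Lambda$ by $\Lambda$-invariance of $\mu$ --- and let $V$ be the associated Koopman representation of $\Gamma$ on $L^2(\beta\Gamma,\eta)$. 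Then $\Omega:=(d\mu/d\eta)^{1/2}$ is a unit vector, $V(\Lambda)$-invariant because $t\cdot\mu=\mu$, and $\langle V_g\Omega,\Omega\rangle=\int(\tfrac{d\mu}{d\eta}\tfrac{d(g\cdot\mu)}{d\eta})^{1/2}\,d\eta$ equals the Hellinger affinity of $\mu$ and $g\cdot\mu$, hence is $>0$. The key point is that $\Lambda$-invariance of $\mu$ puts $V$ inside an induced representation: the $\Gamma$-equivariant map $\Gamma\times_\Lambda\operatorname{supp}\mu\to\beta\Gamma$, $[h,x]\mapsto hx$, pushes the canonical quasi-invariant measure of $\Gamma\times_\Lambda\operatorname{supp}\mu$ forward to $\eta$, so $L^2(\beta\Gamma,\eta)$ identifies with a $\Gamma$-invariant subspace of $L^2(\Gamma\times_\Lambda\operatorname{supp}\mu)=\Ind_\Lambda^\Gamma(W)$, where $W$ is the Koopman representation of $\Lambda$ on $L^2(\operatorname{supp}\mu,\mu)$. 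Since $\Lambda$ is amenable, $W\prec\lambda_\Lambda$, whence $V\prec\Ind_\Lambda^\Gamma(W)\prec\Ind_\Lambda^\Gamma(\lambda_\Lambda)\cong\lambda_\Gamma$. Weak containment then lets us approximate the positive-definite function $h\mapsto\langle V_h\Omega,\Omega\rangle$ on finite sets by a matrix coefficient $h\mapsto\langle\lambda_h\xi,\xi\rangle$ of $\lambda_\Gamma$: for every finite $F\subseteq\Lambda$ and $\varepsilon>0$ there is a unit vector $\xi_{F,\varepsilon}\in\ell^2\Gamma$ with $\langle\lambda_t\xi_{F,\varepsilon},\xi_{F,\varepsilon}\rangle>1-\varepsilon$ for $t\in F$ --- so $(\xi_{F,\varepsilon})$ is a net of almost $\Lambda$-invariant unit vectors --- and $\langle\lambda_g\xi_{F,\varepsilon},\xi_{F,\varepsilon}\rangle$ within $\varepsilon$ of $\langle V_g\Omega,\Omega\rangle\ne0$, contradicting $(4)$.

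The hard part is the identification of $V$ with a subrepresentation of $\Ind_\Lambda^\Gamma(W)$: one must set up the fibred space $\Gamma\times_\Lambda\operatorname{supp}\mu$ together with its Radon--Nikodym cocycle and verify that $[h,x]\mapsto hx$ induces a $\Gamma$-equivariant isometry of the $L^2$-spaces. Note that the translates $h\cdot\mu$ genuinely overlap here --- this overlap being precisely the failure of singularity --- so $V$ is a proper subrepresentation of $\Ind_\Lambda^\Gamma(W)$, not all of it. Everything else (Hahn--Banach, Følner averaging, the Jordan decomposition of a self-adjoint functional, and the permanence of weak containment under induction together with $\Ind_\Lambda^\Gamma(\lambda_\Lambda)\cong\lambda_\Gamma$) is standard.
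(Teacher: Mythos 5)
Your implications $(1)\Rightarrow(2)$, $(2)\Rightarrow(3)$ and $(3)\Rightarrow(4)$ are essentially the paper's own arguments (one slip in $(2)\Rightarrow(3)$: the set $C=\overline{\conv}\{\lambda(tgt^{-1}):t\in\Lambda\}$ is \emph{not} self-adjoint in general --- its adjoint is built from conjugates of $g^{-1}$, and ``$g^{-1}\notin\Lambda$'' is irrelevant --- but this is harmless: decompose the averaged functional into its four positive parts, each $\Lambda$-invariant by uniqueness, exactly as the paper does). The genuine problem is in $(4)\Rightarrow(1)$, at the step you yourself flag as ``the hard part''. The map $p:\Gamma\times_\Lambda\operatorname{supp}\mu\to\beta\Gamma$, $[h,x]\mapsto hx$, does push the fibred measure $\nu=\sum_{h\Lambda}c_{h\Lambda}\,(h\cdot\mu)$ (on the disjoint fibres) forward to $\eta$, so $f\mapsto f\circ p$ is an isometry $L^2(\beta\Gamma,\eta)\to L^2(\Gamma\times_\Lambda\operatorname{supp}\mu,\nu)$; but it does \emph{not} intertwine the Koopman representations. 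Upstairs the Radon--Nikodym cocycle is constant on each fibre, equal to $c_{\gamma^{-1}k\Lambda}/c_{k\Lambda}$ on the fibre over $k\Lambda$, whereas downstairs $\tfrac{d(\gamma\eta)}{d\eta}(x)=\sum_j c_{\gamma^{-1}j\Lambda}\tfrac{d(j\mu)}{d\eta}(x)$ mixes all cosets whose translates charge $x$. The two agree precisely at points lying in the support of a single translate $h\cdot\mu$, i.e.\ when the translates are mutually singular --- but the whole point of the contrapositive is that they overlap (this is the failure of singularity you are exploiting), and on the overlap no choice of weights $c_{h\Lambda}$ makes the cocycles match. Twisting the pullback by square roots of the densities restores neither isometry nor equivariance. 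So the claimed identification of $V$ with a subrepresentation of $\Ind_\Lambda^\Gamma(W)$ fails exactly in the regime where you need it, and with it the deduction $V\prec\lambda_\Gamma$ and the approximation of $\langle V_g\Omega,\Omega\rangle$ by coefficients of $\lambda_\Gamma$. (Whether the cyclic representation generated by $\Omega$ is weakly contained in $\lambda_\Gamma$ in this generality is not clear; it would need a different argument, not the induction/quotient identification.)

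The paper avoids representation theory here altogether: if $\mu$ and $g\cdot\mu$ are not mutually singular, view $\mu$ as a $\Lambda$-invariant state on $\ell^\infty(\Gamma)$ with $\Vert\mu-g\cdot\mu\Vert<2$, approximate it by normal states and use Hahn--Banach/convexity to produce positive norm-one $\eta_n\in\ell^1(\Gamma)$ which are asymptotically $\Lambda$-invariant and satisfy $\Vert\eta_n-g\cdot\eta_n\Vert_1\to\Vert\mu-g\cdot\mu\Vert<2$; then $\xi_n=\eta_n^{1/2}\in\ell^2(\Gamma)$ is an almost $\Lambda$-invariant net of unit vectors with $\langle\lambda_g\xi_n,\xi_n\rangle=\tfrac12\bigl(2-\Vert\xi_n-\lambda_g\xi_n\Vert_2^2\bigr)\ge\tfrac12\bigl(2-\Vert\eta_n-g\cdot\eta_n\Vert_1\bigr)$, which stays bounded away from $0$, contradicting $(4)$. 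This pointwise square-root (Powers--St{\o}rmer-type) trick manufactures the vectors directly inside $\ell^2\Gamma$ --- which is also what condition $(4)$ literally requires --- and is the route you should take in place of the induced-representation detour.
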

\begin{proof}
(1) $\Rightarrow$ (2). Assume that $\Lambda$ is singular inside $\Gamma$ and take a $\Lambda$-invariant state $\varphi$ on $C^*_r(\Gamma)$. Fix $g \in G\setminus \Lambda$ and $\varepsilon > 0$. Since $\Lambda$ is amenable there exists a $\Lambda$-invariant state (also denoted by $\varphi$) on $B(\ell^2\Gamma)$ that extends $\varphi$. The restriction of $\varphi$ to $\ell^\infty(\Gamma)$ is a $\Lambda$-invariant state. By singularity, proceeding as in the proof of Theorem \ref{criterion}, we find $f \in \ell^\infty(\Gamma)$ such that $0 \leq f \leq 1$ and $\varphi(1 - f) \leq \varepsilon^2$ while $\varphi(g\cdot f) \leq \varepsilon^2$. Using Cauchy-Schwarz inequality, we get
\[\vert \varphi(\lambda_g) \vert \leq \vert \varphi(\lambda_g f)\vert + \varepsilon \leq \varphi(\lambda_g f \lambda_g^*)^{1/2}\varphi(f)^{1/2} + \varepsilon \leq 2\varepsilon.\]
As $\varepsilon$ was arbitrary, we indeed see that $\varphi(\lambda_g) = 0$.

(2) $\Rightarrow$ (3). Assume that there exists a $g \in \Gamma \setminus \Lambda$ for which the conclusion of (3) does not hold. By Hahn–Banach, there is $\varphi \in C^*_r(\Gamma)^*$ such that $\inf_{t \in \Lambda} \Re (\varphi(\lambda_{tgt^{-1}})) > 0$. By taking an average over $\{\varphi \circ \Ad(\lambda_t) \, , \, t \in \Lambda\}$, we may assume that $\varphi$ is $\Lambda$-invariant. Jordan decomposition now gives a $\Lambda$-invariant state on $C^*_r(\Gamma)$ such that $\varphi(g) \neq 0$. This violates (3).

(3) $\Rightarrow$ (4). Take a net $(\xi_n)$ of almost $\Lambda$-invariant unit vectors in $\ell^2\Gamma$ and $g \in \Gamma \setminus \Lambda$. Fix $\varepsilon > 0$. Assuming (3), we find $x \in \conv(\{\lambda(tgt^{-1}) \, , \, t \in \Lambda\})$ such that $\Vert x \Vert \leq \varepsilon$. By almost $\Lambda$-invariance, we get
\[ \limsup_n \vert \langle \lambda_g \xi_n , \xi_n \rangle \vert =  \limsup_n \vert \langle x \xi_n , \xi_n \rangle \vert \leq \varepsilon.\]

(4) $\Rightarrow$ (1). Suppose that $\Lambda$ is not singular inside $\Gamma$. Then we get a $\Lambda$-invariant state $\varphi$ on $\ell^\infty(\Gamma)$ and $g \in \Gamma \setminus \Lambda$ such that $\varphi$ is not singular with respect to $g \cdot \varphi$. Equivalently, this means that $\Vert \varphi - g \cdot \varphi \Vert_1 < 2$. Approximating $\varphi$ with normal states and using Hahn-Banach Theorem, we find a net of positive, norm one elements $\eta_n \in \ell^1(\Gamma)$ which is asymptotically $\Lambda$-invariant and satisfies $\Vert \eta_n - g \cdot \eta_n \Vert_1 \to \Vert \varphi - g \cdot \varphi \Vert_1 < 2$.

Define now $\xi_n = \eta_n^{1/2} \in \ell^2(\Gamma)$. Then these unit vectors are asymptotically $\Lambda$-invariant and yet we have the following fact showing that (4) does not hold.
\[\langle \lambda_g \xi_n , \xi_n \rangle = \frac{1}{2}(2 - \Vert \xi_n - \lambda_g \xi_n\Vert_2^2) \geq \frac{1}{2}(2 - \Vert \eta_n - g \cdot \eta_n\Vert_1) \not\to 0.\qedhere\] 
\end{proof}

%%%%%
\section{Examples}

\subsection{Hyperbolic groups}

As a first application of our criterion, we give a new proof of the main result of \cite{BC14}. Note that one can also recover the results from \cite{BC14} about {\it relatively} hyperbolic groups.

\begin{prop}[\cite{BC14}]
\label{BC14}
If $\Lambda$ is any infinite maximal amenable subgroup in a hyperbolic group $\Gamma$, then $\Lambda$ is \defin in $\Gamma$ with respect to the action $\Gamma \curvearrowright X = \pG$ on the Gromov boundary. In particular $L\Lambda$ is maximal amenable inside $L\Gamma$.
\end{prop}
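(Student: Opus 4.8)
The plan is to verify the singularity condition directly: given an infinite maximal amenable subgroup $\Lambda$ of a hyperbolic group $\Gamma$, I must show that for every $\mu \in \Prob_\Lambda(\pG)$ and every $g \in \Gamma \setminus \Lambda$, the measures $g \cdot \mu$ and $\mu$ are mutually singular. The starting point is the classical dichotomy for amenable subgroups of hyperbolic groups: an infinite amenable subgroup $\Lambda$ is either virtually cyclic (and then fixes a two-point set $\{\lambda^+, \lambda^-\} \subset \pG$) or is ``parabolic'' in the sense that it fixes a unique point $\xi_0 \in \pG$; in both cases $\Lambda$ has a canonical finite limit set $L(\Lambda) \subset \pG$ (of cardinality $1$ or $2$) which is the only closed $\Lambda$-invariant finite subset, and any $\Lambda$-invariant probability measure must be supported on $L(\Lambda)$. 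Indeed, if $\mu$ gave positive mass to $\pG \setminus L(\Lambda)$, then pushing forward a point of that set under a sequence realizing the North-South (or convergence) dynamics of an infinite-order element of $\Lambda$ would force $\mu$ to have an atom at a point of $L(\Lambda)$ of mass larger than $1$, a contradiction; so in fact $\Prob_\Lambda(\pG)$ consists of the finitely-supported measures on $L(\Lambda)$.

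Once $\mu$ is known to be supported on the finite set $L(\Lambda)$, singularity becomes a purely combinatorial statement: $g \cdot \mu$ is supported on $g \cdot L(\Lambda)$, so $g \cdot \mu \perp \mu$ as soon as $g \cdot L(\Lambda) \cap L(\Lambda) = \emptyset$, and more generally the two measures fail to be mutually singular only if they share an atom, i.e. only if $g \cdot L(\Lambda) \cap L(\Lambda) \neq \emptyset$. So the crux is to rule out, for $g \notin \Lambda$, the possibility that $g$ maps some point of $L(\Lambda)$ to a point of $L(\Lambda)$ that carries positive $\mu$-mass while also having $g \cdot \mu$ and $\mu$ agree there. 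The key algebraic input is that $\Lambda$, being maximal amenable, equals the stabilizer $\Stab_\Gamma(L(\Lambda))$ of its limit set (the setwise stabilizer of a finite subset of $\pG$ is always virtually cyclic or parabolic, hence amenable, and it contains $\Lambda$, so by maximality it equals $\Lambda$; this is exactly where maximality is used). Consequently, if $g \notin \Lambda$ then $g \cdot L(\Lambda) \neq L(\Lambda)$, and since both sets have the same cardinality $\le 2$, the intersection $g \cdot L(\Lambda) \cap L(\Lambda)$ is a proper subset of each. When $|L(\Lambda)| = 1$ this already gives $g \cdot L(\Lambda) \cap L(\Lambda) = \emptyset$ and we are done; when $|L(\Lambda)| = 2$, say $L(\Lambda) = \{\lambda^+,\lambda^-\}$ with $\mu = \alpha\delta_{\lambda^+} + (1-\alpha)\delta_{\lambda^-}$, one has to handle the case where $g$ shares exactly one endpoint.

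The remaining case is the main obstacle, and the way to close it is to use $\Lambda$-invariance of $\mu$ more carefully. If $g$ fixes, say, $\lambda^+$ and moves $\lambda^-$ off $L(\Lambda)$, then $g \cdot \mu = \alpha \delta_{\lambda^+} + (1-\alpha)\delta_{g\lambda^-}$; this is singular to $\mu$ unless $\alpha = 1$, i.e. unless $\mu = \delta_{\lambda^+}$. So I must exclude a $\Lambda$-fixed atom at a single endpoint when the other endpoint is available. But $\Lambda$ virtually cyclic acts on $\{\lambda^+,\lambda^-\}$ and the subgroup $\Lambda_0$ fixing both endpoints has index $1$ or $2$; a $\Lambda$-invariant measure must be invariant under the (possibly trivial) flip, and invariance plus the fact that any loxodromic element of $\Lambda$ fixes both points individually show $\mu$ can be any measure on $\{\lambda^+,\lambda^-\}$ — so $\mu = \delta_{\lambda^+}$ genuinely can occur, and I instead rule out $g$ by a finer dynamical argument: an element $g$ with $g\lambda^+ = \lambda^+$ and $g\lambda^- \notin \{\lambda^+,\lambda^-\}$ would, together with a loxodromic $h \in \Lambda$, generate a group still fixing $\lambda^+$, hence amenable (stabilizers of a single boundary point in a hyperbolic group are amenable), contradicting $g \notin \Lambda = \Stab_\Gamma(L(\Lambda))$ via maximality — wait, one must check $\langle \Lambda, g\rangle$ fixes $\lambda^+$, which it does, and is therefore amenable, so it equals $\Lambda$ by maximality, forcing $g \in \Lambda$. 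This contradiction disposes of the troublesome case. The conclusion $L\Lambda$ maximal amenable in $L\Gamma$ is then immediate from Theorem \ref{criterion} applied with $Q = \C$.
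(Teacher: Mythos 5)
Your overall plan (every $\Lambda$-invariant measure is supported on the two-point limit set $\{\lambda^+,\lambda^-\}$, then rule out that a translate by $g\notin\Lambda$ can share an atom with $\mu$) is the same as the paper's, but the final case analysis contains a genuine error and a missing case, and these are exactly the points where the paper has to work (Lemma \ref{hypgps}(iii), via \cite[Th\'eor\`eme 8.30]{GdH90}). First, the reduction ``$g\cdot\mu$ is singular to $\mu$ unless $\alpha=1$'' is false: if $g$ fixes $\lambda^+$ and moves $\lambda^-$ off $\{\lambda^+,\lambda^-\}$, then $\mu$ and $g\cdot\mu$ share the atom at $\lambda^+$ of mass $\alpha$, so they fail to be mutually singular as soon as $\alpha>0$, not only when $\mu=\delta_{\lambda^+}$. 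Hence you may not assume $\mu$ is a Dirac mass, and your key step ``$\Lambda$ fixes $\lambda^+$, so $\langle\Lambda,g\rangle$ fixes $\lambda^+$, is amenable, and equals $\Lambda$ by maximality'' is only justified when invariance of $\mu$ forces $\Lambda$ to fix the endpoints individually (e.g.\ $\alpha\neq 1/2$ or $\alpha=1$). For $\alpha=1/2$ the group $\Lambda$ may contain an element exchanging $\lambda^+$ and $\lambda^-$, in which case $\langle\Lambda,g\rangle$ does not fix $\lambda^+$, and amenability of $\langle h,g\rangle$ for a loxodromic $h\in\Lambda$ does not help, since that group need not contain $\Lambda$ and so maximality cannot be applied to it.

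Second, you never treat the case where $g$ sends one endpoint to the other without preserving the pair, say $g\lambda^+=\lambda^-$ and $g\lambda^-\notin\{\lambda^+,\lambda^-\}$: for $0<\alpha<1$ this again produces a common atom, so non-singularity is not excluded by anything you wrote. This is precisely the case on which the paper's proof of Lemma \ref{hypgps}(iii) concentrates: it first uses \cite[Th\'eor\`eme 8.30]{GdH90} to see that any element fixing $\lambda^+$ or $\lambda^-$ already lies in $\Lambda$, and then handles $g\lambda^+=\lambda^-$ by distinguishing whether $\Lambda$ contains an endpoint-exchanging element $s$ (then $sg$ fixes an endpoint, so $g\in\Lambda$) or fixes both endpoints (then $g$ normalizes $\Lambda$ and maximal amenability gives $g\in\Lambda$). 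To close your argument you need such an input --- that $\Lambda$ is the unique maximal elementary subgroup containing a loxodromic $h\in\Lambda$, so that fixing a single endpoint already forces membership in $\Lambda$ --- rather than the Dirac-measure reduction. (A minor point: in a hyperbolic group every infinite amenable subgroup is virtually cyclic, so your ``parabolic'' case $|L(\Lambda)|=1$ does not occur; this is harmless, but the two-point case is the only one, and it is where the gaps lie.)
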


Proposition \ref{BC14} is a direct consequence of the following classical result describing maximal amenable subgroups of a hyperbolic group $\Gamma$ in terms of the action $\Gamma \curvearrowright \pG$.

\begin{lem}
\label{hypgps}
Let $\Lambda < \Gamma$ be an infinite maximal amenable subgroup. The following facts are true.
\begin{enumerate}[(i)]
\item There exist two points $a,b \in \pG$ such that $\Lambda$ is the stabilizer of the set $\{a,b\}$, that is $\Lambda = \Stab_\Gamma(\{a,b\}) := \{ g \in \Gamma \, \vert \, g\cdot \{a,b\} = \{a,b\} \}$.
\item Any $\Lambda$-invariant probability measure on $\pG$ is of the form $t\delta_a + (1-t)\delta_b$ for some $t\in [0,1]$. 
\item Any element $g \in \Gamma \setminus \Lambda$ is such that $g \cdot \{a,b\} \cap \{a,b\} = \emptyset$.
\end{enumerate}
\end{lem}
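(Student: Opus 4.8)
The plan is to deduce Lemma \ref{hypgps} from two classical inputs: the Tits alternative for hyperbolic groups (every subgroup either contains a non-abelian free subgroup or is virtually cyclic), and the convergence dynamics of the boundary action $\Gamma \curvearrowright \pG$, in particular North--South dynamics of loxodromic elements and the structure of maximal elementary (virtually cyclic) subgroups $E(h)$. I would first dispose of the elementary cases: if $\Gamma$ is finite there is no infinite subgroup and the statement is vacuous, and if $\Gamma$ is two-ended then $\Gamma$ is amenable, so $\Lambda = \Gamma$, $\pG = \{a,b\}$ has exactly two points, and all three assertions are immediate. So I would assume henceforth that $\Gamma$ is non-elementary.

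\textbf{Proof of (i).} Since $\Lambda$ is amenable, the Tits alternative forces it to be virtually cyclic, and being infinite it is two-ended. Hence $\Lambda$ contains a loxodromic element $h$, whose attracting/repelling fixed points $a,b\in\pG$ constitute the limit set of $\Lambda$; this two-point set is preserved by all of $\Lambda$ (it is the unique two-point subset of $\pG$ invariant under the virtually cyclic group $\Lambda$). Thus $\Lambda \subseteq \Stab_\Gamma(\{a,b\})$. Now $\Stab_\Gamma(\{a,b\})$ is exactly the maximal elementary subgroup $E(h)$: it acts properly on a quasi-geodesic axis of $h$, on which $\langle h\rangle$ already acts cocompactly, so it is virtually cyclic, in particular amenable. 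Maximality of $\Lambda$ then yields $\Lambda = \Stab_\Gamma(\{a,b\})$.

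\textbf{Proof of (ii).} Fix $\mu \in \Prob_\Lambda(\pG)$ and a loxodromic $h\in\Lambda$ with $h^+ = a$, $h^- = b$. By North--South dynamics, $h^n x \to a$ for every $x \in \pG\setminus\{b\}$, while $h^n b = b$ for all $n$. Hence for each $\phi \in C(\pG)$ the functions $x \mapsto \phi(h^n x)$ are uniformly bounded and converge pointwise, $\mu$-almost everywhere, to $\phi(a)\mathbf{1}_{\pG\setminus\{b\}} + \phi(b)\mathbf{1}_{\{b\}}$. Since $\mu$ is $h$-invariant, $\int \phi\, d\mu = \int \phi(h^n x)\, d\mu(x)$ for every $n$, and letting $n\to\infty$ (dominated convergence) gives $\int\phi\,d\mu = (1-c)\phi(a) + c\phi(b)$ where $c := \mu(\{b\})$. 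Therefore $\mu = (1-c)\delta_a + c\delta_b$, i.e. (ii) holds with $t = 1-c$.

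\textbf{Proof of (iii), and main obstacle.} Let $g\in\Gamma\setminus\Lambda$ and suppose $g\cdot\{a,b\}\cap\{a,b\}\neq\emptyset$. Since $g$ is a bijection of $\pG$, the set $g\cdot\{a,b\}$ has two elements; it cannot equal $\{a,b\}$, since that would put $g$ in $\Stab_\Gamma(\{a,b\}) = \Lambda$ by (i). So $g\cdot\{a,b\}$ and $\{a,b\}$ share exactly one point $p$. Then $h$ and $ghg^{-1}$ are loxodromic elements with fixed-point sets $\{a,b\}\ni p$ and $g\cdot\{a,b\}\ni p$ respectively, hence sharing the single fixed point $p$ — impossible in a hyperbolic group, where two loxodromic elements with a common fixed point necessarily have the same fixed-point pair (equivalently, distinct maximal elementary subgroups have disjoint limit sets). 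This contradiction proves (iii). The only point requiring care is the invocation of these classical facts, especially in (i) the statement that the setwise stabilizer of the limit set is again virtually cyclic, hence amenable — this is precisely what makes the maximality of $\Lambda$ bite; all of these inputs are standard for hyperbolic groups (see \cite{BC14} and the references therein), and the substance of the lemma is their assembly.
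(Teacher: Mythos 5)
Your proof is correct, and while its skeleton (virtual cyclicity of $\Lambda$, a loxodromic $h$ with fixed pair $\{a,b\}$, north--south dynamics for (ii), maximality to upgrade the inclusion $\Lambda\subseteq\Stab_\Gamma(\{a,b\})$ to equality) matches the paper's, the two key steps are justified by genuinely different means. For (i), the paper shows $\Lambda$-invariance of $\{a,b\}$ by a ping-pong argument (if $s\in\Lambda$ moved the pair off itself, $\langle h, shs^{-1}\rangle$ would contain a free group) and then gets amenability of $\Stab_\Gamma(\{a,b\})$ from topological amenability of $\Gamma\curvearrowright\pG$ \cite{Ad94} (it stabilizes the measure $\tfrac12(\delta_a+\delta_b)$); you instead identify $\{a,b\}$ with the limit set of $\Lambda$ and invoke the classical fact that $\Stab_\Gamma(\{a,b\})$ is the maximal elementary subgroup $E(h)$, hence virtually cyclic, via its proper action on a quasi-line --- a more elementary route that avoids Adams' boundary amenability altogether (your phrase ``acts properly on a quasi-geodesic axis'' should strictly be ``on the union of geodesics from $a$ to $b$'', which lies at bounded Hausdorff distance from the axis, but this is routine). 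For (iii), the paper reduces to ``any element fixing $a$ or $b$ lies in $\Lambda$'' (via \cite[Th\'eor\`eme 8.30]{GdH90}) and then handles the swapping case $g\cdot a=b$ by a case analysis on whether some $s\in\Lambda$ exchanges $a$ and $b$, using a normalizer argument otherwise; your argument is cleaner: if $g\{a,b\}$ meets $\{a,b\}$ without being equal to it, the loxodromics $h$ and $ghg^{-1}$ share exactly one fixed point, contradicting the same classical fact, so no case analysis on $\Lambda$ is needed. Your (ii) simply writes out the dominated-convergence argument the paper leaves implicit. In short: your version trades the paper's appeal to boundary amenability and its normalizer case analysis for standard elementary-subgroup/convergence-group facts, which makes the proof slightly more self-contained within classical hyperbolic group theory and a bit tighter in part (iii).
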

We have already used this lemma in \cite{BC14} without giving a proof. Let us give a proof for completeness.
\begin{proof}
By \cite[Th\'eor\`eme 8.37]{GdH90}, $\Lambda$ is virtually cyclic. Denote by $h \in \Lambda$ an element of infinite order. Then by \cite[Th\'eor\`eme 8.29]{GdH90}, $h$ is a hyperbolic element: $h$ acts on $\pG$ with a north-south dynamics. Denote by $a$ and $b$ the attractive and repulsive points of $h$.

(i) Take $s \in \Lambda$. Then $shs^{-1}$ is a hyperbolic element with fixed points $s\cdot a$ and $s \cdot b$. If $\{a,b\}\cap \{s\cdot a,s\cdot b\}=\emptyset$, then by the ping-pong lemma, the group generated by $h$ and $shs^{-1}$ contains a free group. Since $\Lambda$ is amenable, this is not possible and hence $h$ and $shs^{-1}$ have at least a fixed point in common, say $a$. By \cite[Th\'eor\`eme 8.30]{GdH90} they also both fix $b$. Hence $\{s \cdot a,s \cdot b\} = \{a,b\}$ and so $\Lambda \subset \Stab_G(\{a,b\})$. The action of $\Gamma$ on the boundary is amenable \cite{Ad94}, so the equality follows from maximal amenability.

(ii) This is a consequence of the north-south dynamics action of $h \in \Lambda$.

(iii) The key result is \cite[Th\'eor\`eme 8.30]{GdH90}, which implies that any element which fixes one of the points $a$ or $b$ is in $\Lambda$. Take $g \in \Gamma$ such that $g\cdot a = b$.
Assume first that there exists $s \in \Lambda$ which exchanges $a$ and $b$. Then $sg$ fixes $a$ and so $g \in \Lambda$.
If all elements in $\Lambda$ fix $a$ and $b$, then $gsg^{-1}$ fixes $b$ and $g^{-1}sg$ fixes $a$ (so they both belong to $\Lambda$), for all $s \in \Lambda$. In that case, $g$ normalizes $\Lambda$ so $g \in \Lambda$ by maximal amenability.
\end{proof}

\begin{rem}
Take a maximal amenable subgroup $\Lambda$ of a hyperbolic group $\Gamma$. The proof of theorem A also shows that for any infinite subgroup $\Lambda_0$ of $\Lambda$, the von Neumann algebra $L\Lambda_0 \subset L\Gamma$ is contained in a unique maximal amenable subalgebra of $L\Gamma$, namely $L\Lambda$.
Our techniques can actually be extended to show that $L\Lambda$ is the unique maximal amenable subalgebra of $L\Gamma$ containing any given diffuse von Neumann subalgebra $Q \subset L\Lambda$. This goes in the direction of a conjecture of J. Peterson and A. Thom that states that any diffuse amenable subalgebra of a free group factor is contained in a unique maximal amenable subalgebra (see the last paragraph of \cite{PT11}).
\end{rem}

\subsection{Amalgamated free products and HNN extensions}

Using Bass-Serre theory, our criterion also applies to amalgamated free products.

\begin{prop}\label{AFP}
Let $\Lambda_1$ and $\Lambda_2$ be discrete groups (not necessarily finitely generated) with a common subgroup $\Lambda_0$. Put $\Gamma := \Lambda_1 \ast_{\Lambda_0} \Lambda_2$.
If $\Lambda_1$ is amenable and the index $[\Lambda_1:\Lambda_0] = \infty$ then $\Lambda_1$ is \defin in $\Gamma$. In particular $L\Lambda_1$ is maximal amenable inside $L\Gamma$.
\end{prop}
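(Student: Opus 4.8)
The plan is to realise $\Lambda_1$ as the stabiliser of a point in a suitable compact $\Gamma$-space, check singularity there by a pigeonhole argument, and then invoke Theorem~\ref{criterion}. The relevant space is the Bass-Serre tree $T$ associated with the splitting $\Gamma=\Lambda_1\ast_{\Lambda_0}\Lambda_2$, together with its ends: I would take $X:=\overline T=T\sqcup\partial T$, equipped with its usual (``shadow'') topology, on which $\Gamma$ acts by homeomorphisms. Recall that the vertex set of $T$ is $\Gamma/\Lambda_1\sqcup\Gamma/\Lambda_2$, that the base vertex $o:=\Lambda_1\in\Gamma/\Lambda_1$ satisfies $\Stab_\Gamma(o)=\Lambda_1$, and that the set $E_o$ of edges of $T$ issuing from $o$ is in $\Lambda_1$-equivariant bijection with $\Lambda_1/\Lambda_0$ (acted on by left translation). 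Thus the hypothesis $[\Lambda_1:\Lambda_0]=\infty$ says precisely that $E_o$ is countably infinite and that $\Lambda_1$ acts transitively on it.

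Granting that $X$ is compact, the singularity of $\Lambda_1$ is then almost immediate. For $e\in E_o$, let $H_e:=\{x\in X:\text{the geodesic from }o\text{ to }x\text{ leaves }o\text{ along }e\}$ be the corresponding branch of $X$; each $H_e$ is clopen, the family $(H_e)_{e\in E_o}$ is a partition of $X\setminus\{o\}$, and $\Lambda_1$ permutes this family according to its transitive action on $E_o$. Hence, for every $\mu\in\Prob_{\Lambda_1}(X)$, the quantity $\mu(H_e)$ does not depend on $e$; since $E_o$ is infinite while $\sum_{e\in E_o}\mu(H_e)=\mu(X\setminus\{o\})\le1$, this common value must be $0$, so $\mu(X\setminus\{o\})=0$ and $\mu=\delta_o$. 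If now $g\in\Gamma\setminus\Lambda_1$, then $g\cdot o\ne o$ because $\Lambda_1=\Stab_\Gamma(o)$, and therefore $g\cdot\mu=\delta_{g\cdot o}\perp\delta_o=\mu$. Thus $\Lambda_1$ is singular in $\Gamma$ with respect to $X$, and Theorem~\ref{criterion} with $Q=\C$ gives that $L\Lambda_1$ is maximal amenable in $L\Gamma$ — in fact, more generally, that $Q\rtimes\Lambda_1$ is maximal amenable in $Q\rtimes\Gamma$ for every finite amenable $Q$ carrying a trace-preserving $\Gamma$-action.

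The one point that genuinely needs care, and which I would regard as the main obstacle, is the compactness of $X=\overline T$: since $\Lambda_0$ may also have infinite index in $\Lambda_2$, the tree $T$ need not be locally finite, so some care is needed (as always with non-proper Gromov boundaries). It is nonetheless a standard fact that the end compactification of an arbitrary countable tree is compact (and metrizable); the cleanest way to see this is to identify $\overline T$ with the inverse limit, taken over the directed family of finite subtrees $T_0\ni o$, of the finite sets $T_0$ along the nearest-point retractions $\overline T\to T_0$, and $\Aut(T)$ — hence $\Gamma$ — acts on this inverse limit by homeomorphisms. (Alternatively, a direct K\"onig's-lemma argument extracts a finite subcover from any open cover.) Once this is recorded the argument above goes through verbatim; I would also point out, to justify keeping the ends in $X$, that the one-point compactification $V(T)\sqcup\{\infty\}$ will not do, since there the $\Lambda_1$-invariant measures are exactly the $t\,\delta_o+(1-t)\,\delta_\infty$, and these fail to be singular to their $g$-translates because of the shared atom at the global fixed point $\infty$ — it is precisely the ends of $T$ that rule out this ``escape to infinity''.
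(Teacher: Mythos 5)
Your proposal is correct and follows essentially the same route as the paper: the paper also works with the compactification $X=V(T)\cup\partial T$ of the (non-locally-finite) Bass--Serre tree, using Bowditch's fine-hyperbolic-graph topology with basic open sets $M(x,A)$ in place of your inverse-limit/K\"onig justification of compactness, and then runs the same pigeonhole argument: the infinitely many disjoint branches at the vertex $o=\Lambda_1$ are permuted transitively by $\Lambda_1$, so any $\Lambda_1$-invariant measure is $\delta_o$, giving singularity and hence the conclusion via Theorem~\ref{criterion}. The only differences are cosmetic (your explicit final step $g\cdot\delta_o\perp\delta_o$ is left implicit in the paper, and your aside attributing non-local-finiteness to $[\Lambda_2:\Lambda_0]$ is immaterial since $[\Lambda_1:\Lambda_0]=\infty$ already forces it).
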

\begin{proof} 
Let us first construct the compact $\Gamma$-space $X$ for which we will verify the singularity property of $\Lambda_1 < \Gamma$. 
Assume that $\Gamma$ is as in the statement of Proposition \ref{AFP} and consider the Bass-Serre tree $T$ of $\Gamma$. By definition the vertex set of $T$ equals to $V(T) := \Gamma/\Lambda_1 \sqcup \Gamma/\Lambda_2$ and its edge set equals to $E(T) := \Gamma/\Lambda_0$, where the edge $g\Lambda_0$ relates $g\Lambda_1$ to $g\Lambda_2$.
By assumption the vertex $\Lambda_1$ has infinitely many neighbours.
In particular this tree is not locally finite. However every tree is by definition a uniformly fine hyperbolic graph in the sense of \cite[Section 8]{Bo12}, so one can still consider its visual boundary $\partial T$ and define a compact topology on $X := V(T) \cup \partial T$ as follows.

For $x,y \in X$ denote by $[x,y]$ the unique geodesic path between $x$ and $y$. If $x \in X$ and $A \subset V(T)$ is finite set of vertices, define 
\[M(x,A) := \{y \in X \, \vert \, [x,y] \cap A = \emptyset \}.\]

Then the family of sets $M(x,A)$ with $x \in X$, $A \subset V(T)$ finite, forms an open basis of a compact (Hausdorff) topology on $X$. See \cite[Section 8]{Bo12} for a proof or \cite[Section 2]{Oz06} for a short presentation of these facts.

Note that the action $\Gamma \curvearrowright X$ is continuous for this topology.

To prove Proposition \ref{AFP}, it is enough to show that the only $\Lambda_1$-invariant probability measure on $X$ is the Dirac measure $\delta_{\Lambda_1}$.

To that aim, assume that $\mu$ is a $\Lambda_1$-invariant probability measure on $X$. Note that since $\Lambda_0$ has infinite index in $\Lambda_1$, the vertex $\Lambda_1$ has infinitely many neighbors $\{g\Lambda_2\}_{g\in R}$, where $R \subset \Lambda$ is a section for the onto map $\Lambda_1 \to \Lambda_1/\Lambda_0$. Since $T$ is a tree, the open sets $\{M(g\Lambda_2,\{\Lambda_1\})\}_{g \in R}$ are disjoint and moreover $\Lambda_1$ acts transitively on these open sets. Hence they must have measure $0$ and therefore the probability measure $\mu$ has to be supported on their complement, namely $\{\Lambda_1\}$. 
\end{proof}

With the same proof we also get the following result.

\begin{prop}
Assume that $\Gamma = \HNN(\Lambda,\Lambda_0,\theta)$ is an HNN extension, where $\Lambda_0 < \Lambda$ and $\theta: \Lambda_0 \hookrightarrow \Lambda$ is an injective morphism. 
If $\Lambda$ is amenable and $[\Lambda:\Lambda_0] = [\Lambda:\theta(\Lambda_0)] = \infty$ then $\Lambda$ is \defin in $\Gamma$. 
\end{prop}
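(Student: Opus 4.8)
The plan is to mimic the proof of Proposition \ref{AFP}, replacing the Bass--Serre tree of the amalgamated free product by the Bass--Serre tree $T$ of the HNN extension $\Gamma = \HNN(\Lambda,\Lambda_0,\theta)$. Recall that this tree has a single orbit of vertices, $V(T) := \Gamma/\Lambda$, and a single orbit of edges, $E(T) := \Gamma/\Lambda_0$, where the edge $g\Lambda_0$ joins the vertex $g\Lambda$ to the vertex $gt\Lambda$, with $t$ the stable letter (so that $t\Lambda_0 t^{-1} = \theta(\Lambda_0)$ inside $\Gamma$). As in Proposition \ref{AFP}, since $T$ is a tree it is in particular a uniformly fine hyperbolic graph in the sense of \cite[Section 8]{Bo12}, so we may form the compact Hausdorff space $X := V(T) \cup \partial T$ with the topology generated by the sets $M(x,A)$, on which $\Gamma$ acts continuously. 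It then suffices, exactly as before, to show that the only $\Lambda$-invariant probability measure on $X$ is the Dirac mass $\delta_{\Lambda}$ at the base vertex.

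So let $\mu \in \Prob_\Lambda(X)$. The key point is to enumerate the neighbours of the base vertex $\Lambda$ in $T$ and check that $\Lambda$ permutes them in infinite orbits. The neighbours of $\Lambda$ come in two families: those reached by ``positive'' edges $g\Lambda_0$ with $g \in \Lambda$ (leading to vertices $gt\Lambda$), and those reached by ``negative'' edges, i.e. edges of the form $\Lambda_0 t^{-1}$ translated by $\Lambda$ (leading to vertices of the form $g t^{-1}\Lambda$ with $g\in\Lambda$). Concretely: the positive neighbours are indexed by $\Lambda/\Lambda_0$ via $g\Lambda_0 \mapsto gt\Lambda$ (well-defined and injective since the stabiliser of the edge $t\Lambda$... more precisely since $g t\Lambda = g' t\Lambda$ iff $g^{-1}g' \in t\Lambda t^{-1} \cap \Lambda = \theta(\Lambda_0)$ — here I should be a little careful and it may be cleaner to index positive neighbours by $\Lambda/\theta(\Lambda_0)$ and negative ones by $\Lambda/\Lambda_0$). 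In either case the hypothesis $[\Lambda:\Lambda_0] = [\Lambda:\theta(\Lambda_0)] = \infty$ guarantees that each of these two families of neighbours is infinite, and $\Lambda$ acts on each family transitively (it acts transitively on $\Lambda/\Lambda_0$ and on $\Lambda/\theta(\Lambda_0)$). Since $T$ is a tree, for distinct neighbours $x,x'$ of the base vertex the sets $M(x,\{\Lambda\})$ and $M(x',\{\Lambda\})$ are disjoint; as $\mu$ is a probability measure and $\Lambda$ permutes these (infinitely many) disjoint open sets transitively within each family, each must have $\mu$-measure $0$. The complement of the union of all the $M(x,\{\Lambda\})$, over all neighbours $x$ of $\Lambda$, is exactly $\{\Lambda\}$, so $\mu = \delta_\Lambda$.

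From $\mu = \delta_\Lambda$ the singularity of $\Lambda$ in $\Gamma$ is immediate: for $g \in \Gamma \setminus \Lambda$ we have $g\cdot\Lambda \neq \Lambda$ as vertices of $T$, so $g\cdot\mu = \delta_{g\Lambda} \perp \delta_\Lambda = \mu$ (two distinct Dirac masses are mutually singular). Together with the amenability of $\Lambda$ this verifies the definition of singular subgroup, and Theorem \ref{criterion} then applies.

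The main obstacle I anticipate is purely bookkeeping rather than conceptual: getting the identification of the neighbour set of the base vertex of the HNN Bass--Serre tree exactly right, in particular keeping straight which family of neighbours is parametrised by $\Lambda/\Lambda_0$ and which by $\Lambda/\theta(\Lambda_0)$, and verifying the injectivity of these parametrisations using the edge stabilisers. This is why the statement needs \emph{both} index hypotheses $[\Lambda:\Lambda_0]=\infty$ and $[\Lambda:\theta(\Lambda_0)]=\infty$, rather than just one: one controls the positive neighbours and the other the negative ones. Once that combinatorial picture is pinned down, the measure-theoretic argument is verbatim the one in Proposition \ref{AFP}.
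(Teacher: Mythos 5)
Your proposal is correct and follows essentially the same route as the paper: compactify the Bass--Serre tree of the HNN extension and show that the only $\Lambda$-invariant probability measure is $\delta_\Lambda$, using the two infinite families of neighbours of the base vertex (indexed by $\Lambda/\Lambda_0$ and $\Lambda/\theta(\Lambda_0)$) on which $\Lambda$ acts transitively, exactly as in the amalgamated free product case. The only wrinkle is the bookkeeping you flag yourself: with the incidence ``$g\Lambda_0$ joins $g\Lambda$ to $gt\Lambda$'' the consistent convention is $t^{-1}\Lambda_0 t=\theta(\Lambda_0)$ (so $t\Lambda t^{-1}\cap\Lambda=\Lambda_0$, positive neighbours indexed by $\Lambda/\Lambda_0$, negative ones by $\Lambda/\theta(\Lambda_0)$, as in the paper), but since both index sets are infinite under the hypotheses, this does not affect the argument.
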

\begin{proof}[Sketch of proof]
  Assume that $\Gamma=\langle \Lambda, t | t^{-1}gt=\theta(g)\text{ for }g\in\Lambda_0\rangle$. As before, we will show that $\Lambda$ is singular in $\Gamma$ with respect to the compactification of the Bass-Serre tree $T$ of $\Lambda$. Let us describe the Bass-Serre tree. By definition the vertex set of $T$ equals to $V(T) := \Gamma/\Lambda$ and the vertex set equals to $E(T) :=\Gamma/\Lambda_0$, where the edge $g\Lambda_0$ connects $g\Lambda$ to $gt\Lambda$. The vertex $\Lambda$ has for neighbors all the vertices in one of the two families:
\begin{itemize}
\item The collection $\{gt\Lambda\}_{g \in R_1}$, where $R_1 \subset \Lambda$ is a section for the onto map $\Lambda \to \Lambda/\Lambda_0$;
\item The collection $\{gt^{-1}\Lambda\}_{g \in R_2}$, where $R_2 \subset \Lambda$ is a section for the map $\Lambda \to \Lambda/\theta(\Lambda_0)$.
\end{itemize}
These two families are infinite by our index assumptions. Moreover, $\Lambda$ acts transitively on each of these two families. So one can proceed exactly as in the previous proposition to deduce that any $\Lambda$-invariant probability measure on the compactification has to be supported on the vertex $\Lambda$. 
\end{proof}

In the finite index setting the result is false in general and the condition of Theorem \ref{criterion} is never satisfied. For instance assume that $\Gamma = \BS(m,n) = \langle a,t \, \vert \, ta^nt^{-1} = a^m \rangle$ with $m,n \geq 2$, and that $\Lambda = \langle a \rangle$. Then the conjugacy action of $\Lambda$ on $\Gamma \setminus \Lambda$ admits a finite orbit. Namely, $tat^{-1}$ has an orbit with $m$ elements, and so the element $x := \sum_{k=0}^{m-1} a^ktat^{-1}a^{-k} \in \C\Gamma$  commutes with $L\Lambda$.
In this case, $L\Lambda$ is not even maximal abelian in $L\Gamma$. However, one can check using Lemma \ref{groupmaxamen} that $\Lambda$ is maximal amenable inside $\Gamma$ as soon as $\vert m \vert ,\vert n \vert \geq 3$ (but it is not true for $\vert n \vert = 2$ or $\vert m \vert =2$).

\subsection{Lattices in semi-simple groups}

Finally, our criterion also allows to produce examples of a different kind, out of the (relatively)-hyperbolic world. 

\begin{prop}\label{slnz}
  For $n \geq 2$, put $\Gamma := \SL_n(\Z)$ and denote by $\Lambda$ the subgroup of upper triangular matrices in $\Gamma$. Then $\Lambda$ is singular in $\Gamma$. Moreover, $L\Lambda$ has a diffuse center.
\end{prop}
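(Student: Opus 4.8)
\emph{Sketch of the argument.} The plan is to realize $\Lambda$ as the stabilizer of a point for the continuous action of $\Gamma=\SL_n(\Z)$ on the \emph{compact} real flag variety $X:=\SL_n(\R)/B_n(\R)$, and to classify the $\Lambda$-invariant probability measures on $X$. First note that $\Lambda=B_n(\Z)$ consists of upper triangular integer matrices, so their diagonal entries lie in $\{\pm1\}$; thus $\Lambda$ is an extension of the finitely generated nilpotent group $U_n(\Z)$ of upper unitriangular integer matrices by the finite group of admissible sign diagonals. In particular $\Lambda$ is virtually nilpotent, hence amenable, as required by the definition of singularity.

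The heart of the matter is the claim that the only $\Lambda$-invariant Borel probability measure on $X$ is $\delta_{F_0}$, where $F_0=(\langle e_1\rangle\subset\langle e_1,e_2\rangle\subset\cdots)$ is the standard flag. I would prove this by induction on $n$, in the more flexible form where $\Lambda$ is replaced by any subgroup $\Delta\le\operatorname{GL}_n(\Z)$ of upper triangular matrices (with diagonal in $\{\pm1\}$) containing the elementary matrices $I+E_{k,k+1}$ for $1\le k\le n-1$. Given a $\Delta$-invariant measure $\mu$ on $X$, push it forward along $F\mapsto L_1$ to a measure $\nu$ on $\mathbb{P}(\R^n)=\mathbb{P}^{n-1}(\R)$. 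The decomposition of $\mathbb{P}^{n-1}(\R)$ into the strata $P_m:=\{[v]: v_m\neq 0,\ v_j=0 \text{ for } j>m\}$ ($1\le m\le n$) is invariant under every invertible upper triangular matrix, and upon normalizing $v_m=1$ one identifies $P_m$ with $\R^{m-1}$ (coordinates $v_1,\dots,v_{m-1}$) in such a way that $I+E_{m-1,m}\in\Delta$ acts by the translation $v_{m-1}\mapsto v_{m-1}+1$. Since no nonzero finite measure on $\R^{m-1}$ is invariant under this translation (the half-integer unit slabs are pairwise disjoint of equal measure), $\nu(P_m)=0$ for all $m\ge2$, hence $\nu=\delta_{[e_1]}$. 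Therefore $\mu$ is concentrated on the fiber over $[e_1]$, i.e. on the flags with $L_1=\langle e_1\rangle$, which is canonically the flag variety of $\R^n/\langle e_1\rangle$; the residual $\Delta$-action factors through a subgroup of $\operatorname{GL}_{n-1}(\Z)$ of the same type (the images of the $I+E_{k+1,k+2}$ furnish the needed elementary matrices), so the inductive hypothesis yields $\mu=\delta_{F_0}$.

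Granting this, singularity is immediate: $\Stab_\Gamma(F_0)$ is precisely the group of upper triangular matrices in $\SL_n(\Z)$, namely $\Lambda$, so $g\cdot\delta_{F_0}=\delta_{gF_0}\perp\delta_{F_0}$ for every $g\in\Gamma\setminus\Lambda$, which is exactly the singularity condition with respect to $X$. For the diffuse center, consider the highest root group $Z:=\{I+tE_{1n}: t\in\Z\}\cong\Z$. Since $E_{1n}$ is annihilated on both sides by every strictly upper triangular matrix, $Z$ is central in $U_n(\Z)$; and conjugating $I+tE_{1n}$ by an admissible sign diagonal yields $I+tE_{1n}$ or $I-tE_{1n}$. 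Hence each $I+tE_{1n}$ has a $\Lambda$-conjugacy class of cardinality at most two, so the self-adjoint elements $\lambda_{I+tE_{1n}}+\lambda_{I-tE_{1n}}$ ($t\in\Z$) lie in $Z(L\Lambda)$; inside the copy of $L^\infty(\T)$ spanned by $Z$ they generate the subalgebra of functions invariant under $z\mapsto\bar z$, which is diffuse. As $Z(L\Lambda)$ is abelian and contains a diffuse subalgebra, it can have no minimal projection and is therefore diffuse.

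The only delicate point is the main claim, and within it the requirement that a translation element of $\Delta$ survive on each projective stratum at every inductive stage; this is precisely why one proves the statement for the larger class of upper triangular subgroups of $\operatorname{GL}_n(\Z)$ containing the elementary matrices $I+E_{k,k+1}$, so that the inductive step is available. The remaining verifications (invariance of the strata, the explicit action of $I+E_{m-1,m}$, and the conjugation formulas in the last paragraph) are elementary matrix computations.
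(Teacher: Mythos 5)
Your argument is correct, and it reaches the same two conclusions by the same overall skeleton as the paper: work with the compact flag variety $X=\SL_n(\R)/P$, show that $\delta_{F_0}$ is the \emph{only} $\Lambda$-invariant probability measure, observe $\Stab_\Gamma(F_0)=\Lambda$ to get singularity, and exhibit the diffuse center via the $\Lambda$-conjugacy class $\{I\pm tE_{1,n}\}$ of the highest root elements (the paper uses exactly the same element $I+E_{1,n}$ and the same ``$u+u^*$ generates a diffuse subalgebra'' conclusion). Where you genuinely diverge is in the proof of the measure classification. The paper invokes Moore's theorem \cite[Proposition 2.6]{Mo79}: the support of a $\Lambda_0$-invariant measure ($\Lambda_0=\Gamma\cap N$ the integral unipotents) is pointwise fixed by the Zariski closure $N$, and then an algebraic computation (spectra are conjugation-invariant, $g^{-1}Ng\subset P$ forces $g^{-1}Ng=N$, and $N_G(N)=P$) shows $N$ fixes only $[P]$. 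You instead give an elementary induction: push the measure to $\mathbb{P}^{n-1}(\R)$, stratify by the last nonzero coordinate, note each nontrivial stratum is a copy of $\R^{m-1}$ on which the elementary matrix $I+E_{m-1,m}$ acts by an integer translation (so carries no finite invariant mass), conclude the first flag subspace is $\langle e_1\rangle$ a.s., and pass to the quotient flag variety with the strengthened hypothesis on upper-triangular subgroups of $\operatorname{GL}_{n-1}(\Z)$ containing the elementaries. Your route is self-contained and avoids both Moore's theorem and the Borel--Tits/normalizer facts, at the cost of being tied to the explicit coordinates of $\SL_n$ and its standard Borel; the paper's route is shorter modulo the cited references and, more importantly, its toolkit (Zariski closure of the lattice in the unipotent radical, parabolic/Cartan structure) is what generalizes to arbitrary lattices in semi-simple Lie groups in Proposition \ref{casregulier}. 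All the verifications you leave as ``elementary matrix computations'' (invariance of the strata under upper-triangular matrices, the translation action in the chart $v_m=1$, the conjugation formula $s(I+tE_{1,n})s^{-1}=I\pm tE_{1,n}$, and the fact that an abelian von Neumann algebra containing a unital diffuse subalgebra has no minimal projections) do check out.
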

\begin{proof}
Put $G = \SL_n(\R)$ and denote by $P < G$ be the subgroup of upper triangular matrices, so that $\Lambda = \Gamma \cap P$. We will show that $\Lambda < \Gamma$ is singular with respect to the action on the homogeneous space $B = G/P$.
It is enough to prove that the unique $\Lambda$-invariant probability measure on $B$ is the Dirac mass on $[P]$. Fix $\mu \in \Prob_\Lambda(B)$.

Denote by $N < P$ the subgroup of unipotent matrices and put $\Lambda_0:=\Gamma\cap N$. Then \cite[Proposition 2.6]{Mo79} implies that the support of $\mu$ is pointwise fixed by the Zariski closure of $\Lambda_0$, namely $N$.
So we are left to check that $N$ has only one fixed point on $B$. Note that a point $g[P] \in B$ is fixed by $N$ if and only if  $g^{-1}Ng \subset P$. So let us take $g \in G$ such that $g^{-1}Ng \subset P$ and show that $g \in P$.

For a matrix $h\in G$, we denote with $\sigma(h)$ the spectrum of $h$. Observe that given $p \in P$ we have 
\[p \in N \text{ if and only if }\sigma(p)=\{1\}.\]
Since the spectrum is conjugacy invariant and $g^{-1}ng \in P$ for all $n \in N$, we have that $g^{-1}ng\in N$. Hence $g^{-1}Ng \subset N$, and this is even an equality because the nilpotent groups $N$ and $g^{-1}Ng$ have the same dimension. But a simple induction shows that the normalizer of $N$ in $G$ is $P$, so $g \in P$, as wanted.

For the moreover part, denote by $I$ the identity matrix and by $E_{1,n}$ the matrix with $0$ entries except for the entry row $1$/column $n$ which is equal to $1$. A simple calculation shows that the $\Lambda$-conjugacy class of $I + E_{1,n}$ is contained in $\{I \pm E_{1,n}\}$.
Therefore the center of $L\Lambda$ contains the element  $u + u^*$, where $u$ is the unitary in $L\Lambda$ corresponding to the element $I + E_{1,n} \in \Lambda$. Note that $I + E_{1,n}$ has infinite order, so $u$ generates a copy of $L\Z$. Finally $u +u^*$ generates a subalgebra of index $2$, which implies that $L\Lambda$ has diffuse center.
\end{proof}

Of course, the example given in the above proposition is not abelian (unless $n = 2$). We now turn to the question of existence of abelian, maximal amenable subalgebras in von Neumann algebras associated with lattices in semi-simple Lie groups.

\begin{prop}\label{casregulier}
Consider a lattice $\Gamma$ in a connected semi-simple real algebraic Lie group $G$ with finite center. Then there exists a virtually abelian subgroup $\Lambda$ in $\Gamma$ which is singular in $\Gamma$.
 \end{prop}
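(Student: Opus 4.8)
The plan is to handle separately the real rank one and the higher rank cases. We may assume $\Gamma$ is irreducible (in general one takes products over the simple factors of $G$). If $\mathrm{rk}_{\mathbb{R}}(G)=1$, then $\Gamma$ is a hyperbolic group when it is uniform, and a relatively hyperbolic group (with respect to its virtually nilpotent cusp subgroups) otherwise; in both cases a maximal amenable subgroup of $\Gamma$ containing a loxodromic element is virtually cyclic, hence virtually abelian, and it is singular by Proposition \ref{BC14} (resp.\ by its relatively hyperbolic version). So assume from now on $\mathrm{rk}_{\mathbb{R}}(G)\geq 2$; then by Margulis' arithmeticity theorem $\Gamma$ is arithmetic, and after passing to a commensurable subgroup (which does not affect the conclusion, $\Lambda$ being supported on a finite orbit below) we have $\Gamma=\mathbf{G}(\mathbb{Z})$ for a semisimple linear algebraic $\mathbb{Q}$-group $\mathbf{G}$ with $\mathbf{G}(\mathbb{R})=G$ up to compact and finite factors.

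The heart of the argument is the choice of a maximal $\mathbb{Q}$-torus $\mathbf{T}<\mathbf{G}$ which is simultaneously (i) $\mathbb{Q}$-anisotropic; (ii) \emph{maximally split over $\mathbb{R}$}, i.e.\ $A:=\mathbf{T}(\mathbb{R})_s$ is a maximal $\mathbb{R}$-split torus of $G$; and (iii) \emph{generic}, meaning that $\mathrm{Gal}(\overline{\mathbb{Q}}/\mathbb{Q})$ acts on $\mathbf{T}$ through the full relative Weyl group $W=N_G(A)/Z_G(A)$ --- equivalently, the Galois orbit of any minimal $\mathbb{R}$-parabolic of $G$ containing $\mathbf{T}$ is the set of all such parabolics. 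The existence of such a torus is a standard fact about maximal tori with prescribed local behaviour and Galois action (Prasad--Raghunathan, Prasad--Rapinchuk); for $\mathbf{G}=\SL_n$ one takes $\mathbf{T}=\mathrm{Res}_{K/\mathbb{Q}}\mathbb{G}_m^{(1)}$ for $K$ a totally real field of degree $n$ with Galois closure of group $S_n$. By (i) and Borel--Harish-Chandra (essentially Dirichlet's unit theorem), $\mathbf{T}(\mathbb{Z})=\Gamma\cap\mathbf{T}(\mathbb{R})$ is a cocompact lattice in $\mathbf{T}(\mathbb{R})$; by (ii), $A$ has dimension $r=\mathrm{rk}_{\mathbb{R}}(G)$, so $\mathbf{T}(\mathbb{Z})$ is virtually $\mathbb{Z}^r$. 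I would then set $\Lambda:=\Gamma\cap N_G(A)$; since $Z_G(A)$ is compact-by-$A$ and $[N_G(A):Z_G(A)]=|W|<\infty$, the group $\Lambda$ is virtually abelian, and it contains $\mathbf{T}(\mathbb{Z})$.

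It remains to show that $\Lambda$ is singular with respect to $X:=G/P$, $P$ a minimal $\mathbb{R}$-parabolic containing $A$ (the Furstenberg boundary). Let $\mu\in\Prob_\Lambda(X)$. By Borel's density theorem $\mathbf{T}(\mathbb{Z})\subseteq\Lambda$ is Zariski dense in $\mathbf{T}$, and a Zariski dense subgroup of a torus acting algebraically over $\mathbb{R}$ admits only invariant probabilities supported on its fixed-point set (stratify $X$ by $\mathbf{T}$-orbit type; on a non-trivial orbit the image of $\mathbf{T}(\mathbb{Z})$ in the relevant quotient torus is infinite and, thanks to (ii), has unbounded orbits, carrying no invariant probability --- this is in the spirit of \cite[Proposition 2.6]{Mo79}). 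Hence $\mu$ is supported on $\mathrm{Fix}(\mathbf{T},X)$, which by (ii) equals $\mathrm{Fix}(A,X)=\{n_wPn_w^{-1}:w\in W\}$, a finite set. Now take $g\in\Gamma$ with $g\cdot\mu\not\perp\mu$: since $\mu$ is atomic on this set there is such a parabolic $Q$ with $gQg^{-1}$ of the same form, so $\mathbf{T}$ and $g^{-1}\mathbf{T}g$ are both maximal tori of $Q$, hence conjugate inside $Q$, giving $g\in N_G(\mathbf{T})\,Q$. Applying an arbitrary $\sigma\in\mathrm{Gal}(\overline{\mathbb{Q}}/\mathbb{Q})$ and using that $g$ and $\mathbf{T}$ are $\mathbb{Q}$-rational yields $g\in N_G(\mathbf{T})\,\sigma(Q)$; by the genericity (iii) the $\sigma(Q)$ exhaust $\mathrm{Fix}(A,X)$, so $g^{-1}\mathbf{T}g$ is contained in the intersection of all these parabolics, which is $Z_G(A)$. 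Thus $g^{-1}\mathbf{T}g$ is a maximal torus of $Z_G(A)$, conjugate to $\mathbf{T}$ there, so $g\in N_G(\mathbf{T})\,Z_G(A)\subseteq N_G(A)$, i.e.\ $g\in\Gamma\cap N_G(A)=\Lambda$. Contrapositively, $g\notin\Lambda$ forces $g\cdot\mu\perp\mu$, so $\Lambda$ is singular.

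The main obstacle I anticipate is the Galois-descent step together with the selection of the torus feeding it: one must realize condition (iii) compatibly with (i)--(ii) when quoting the existence theorem, and --- the genuinely delicate point --- one must control the interaction of the $\mathbb{Q}$-structure of $\mathbf{G}$ with the real structure defining the Furstenberg boundary, since an abstract $\sigma\in\mathrm{Gal}(\overline{\mathbb{Q}}/\mathbb{Q})$ need not respect $\mathbb{R}$-rationality; one has to check, using the $*$-action and the precise form of (iii), that $\sigma$ sends a minimal $\mathbb{R}$-parabolic through $\mathbf{T}$ to another such parabolic, so that the Galois orbit used above stays inside $\mathrm{Fix}(A,X)$.
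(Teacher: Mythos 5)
Your strategy (arithmeticity in higher rank, a $\mathbb{Q}$-anisotropic maximally $\mathbb{R}$-split ``generic'' torus, and a Galois/Weyl argument on $G/P$) is genuinely different from the paper's, but as written it has gaps at its crucial points. The pivotal one is the step you yourself flag: you need the Galois conjugates $\sigma(Q)$ of a minimal $\mathbb{R}$-parabolic $Q\supset\mathbf{T}$ to exhaust the set of minimal $\mathbb{R}$-parabolics containing $A$, but an element of $\mathrm{Gal}(\overline{\mathbb{Q}}/\mathbb{Q})$ need not commute with complex conjugation, so $\sigma(Q)$ need not be defined over $\mathbb{R}$ at all; hence the ``equivalently'' in your condition (iii), the exhaustion claim, and therefore the conclusion $\bigcap_\sigma\sigma(Q)\subseteq Z_G(A)$ are exactly the unproven content, and the existence results you invoke (\cite{PR72}, \cite{PR03}) do not deliver (iii) in this form off the shelf. (A possible repair is to demand that the Galois image be the full \emph{absolute} Weyl group and to show directly that the intersection of the absolute Weyl orbit of $Q$ is $Z_\mathbf{G}(\mathbf{T})$, but this is a different argument, delicate when some absolutely simple factors are $\mathbb{R}$-anisotropic, and it is not what you wrote.) Two further steps are not innocuous: singularity is a property quantified over $\Lambda$-invariant measures, and you give no argument that it passes through your reductions --- replacing $\Gamma$ by a commensurable group $\mathbf{G}(\mathbb{Z})$, or splitting off irreducible factors up to finite index --- nor is the rank-one case complete, since it rests on an unproved relatively hyperbolic analogue of Proposition \ref{BC14}. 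Finally, two smaller points: Borel density does not apply to $\mathbf{T}(\mathbb{Z})\subset\mathbf{T}$ (what you need, and can get from the unit-rank computation, is only that the Zariski closure contains the maximal $\mathbb{R}$-split subtorus), and maximal tori of $Q$ are conjugate over an algebraically closed field but not in general by real points, so the passage $g\in N_G(\mathbf{T})Q$ should be run over $\overline{\mathbb{Q}}$ (or via Borel--Tits conjugacy of maximal \emph{split} tori).

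For comparison, the paper's proof avoids arithmeticity and any rank dichotomy altogether: by \cite{PR72} one finds (after conjugating) a Cartan subgroup $H\subset MA$ with $H\cap\Gamma$ cocompact in $H$, sets $\Lambda_0:=\Gamma\cap MA$ and $\Lambda:=N_\Gamma(\Lambda_0)$, and proves singularity with respect to $G/P$ by two elementary claims: any $\Lambda_0$-invariant measure is supported on $F=\mathrm{Fix}(A)$ (averaging over $M$ plus \cite[Proposition 2.6]{Mo79}), and $\Gamma\cap\Stab_G(x)=\Lambda_0$ for every $x\in F$ (via \cite[Theorem 4.15]{BT65} and a contraction/discreteness argument using \cite[Proposition 8.2.4]{Zi84}); finiteness of the Weyl group then makes $\Lambda$ virtually abelian. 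If you want to salvage your route, the number-theoretic input (existence of a torus with (i)--(iii) simultaneously, with (iii) stated so that it survives the $\mathbb{R}$-rationality issue) must be proved or precisely referenced, and the commensurability step must either be justified for singularity or bypassed by noting that $\Gamma$ lies in $\mathbf{G}(\mathbb{Q})$ and $\Gamma\cap\mathbf{T}(\mathbb{R})$ is still an arithmetic subgroup of $\mathbf{T}$.
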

\begin{proof}
Before starting the proof, let us fix some notation. Denote with $d$ the real rank of $G$ and let $G=KAN$ be an Iwasawa decomposition of $G$, so that $K$ is a maximal compact subgroup, $A\cong \R^d$ and $N$ is nilpotent. Denote with $M$ the centralizer of $A$ in $K$. By \cite[Theorem 2.8]{PR72}, replacing $\Gamma$ by one of its conjugates if necessary, there exists an abelian subgroup $H \subset MA$ (a so-called {\it Cartan subgroup}) such that $H\cap \Gamma$ is cocompact in $H$. Moreover $H$ contains $A$, so it is co-compact in $MA$. Therefore $\Lambda_0:= MA\cap \Gamma$ is a co-compact lattice in $MA$ and it contains the abelian subgroup $\Gamma \cap H$ as a finite index subgroup. 

Let $P=MAN$ be a minimal parabolic subgroup. We will show that the normalizer $\Lambda := N_\Gamma(\Lambda_0)$ is singular in $\Gamma$ with respect to the action $\Gamma \curvearrowright G/P$. Consider a measure $\mu \in \Prob_\Lambda(G/P)$. 

{\bf Claim 1.} $\mu$ is supported on the set $F := \{x \in G/P \, \vert \, ax = x, \, \forall a \in A\}$.

The measure $\mu$ is $\Lambda_0$-invariant. Put $\tilde{\mu} := \int_M (g \cdot \mu) dg$, where $dg$ denotes the Haar probability measure on $M$.
Given an element $ma \in \Lambda_0$, with $m \in M$, $a \in A$, we see that 
\[a \cdot \tilde{\mu} =  \int_M (ag \cdot \mu) dg = \int_M (ga \cdot \mu) dg = \int (gma \cdot \mu)dg = \tilde{\mu}\]
Hence $\tilde{\mu}$ is invariant under the projection of $\Lambda_0$ on $A$. But this projection is a lattice in $A$. Applying \cite[Proposition 2.6]{Mo79}, we deduce that $\tilde{\mu}$ is supported on $F$.
Since $M$ commutes with $A$, the set $F$ is globally  $M$-invariant: $g \cdot \mu(F) = \mu(F)$ for all $g \in M$. Hence $1 = \tilde{\mu}(F) = \mu(F)$, as claimed.

{\bf Claim 2.} For all $x \in F$, we have $\Stab_G(x) \cap \Gamma = \Lambda_0$.

To prove this claim take $x \in F$, written $x = gP \in G/P$. Note that $\Stab_G(x) = gPg^{-1}$, so that $A \subset gPg^{-1}$. By \cite[Theorem 4.15]{BT65}, it follows that $gPg^{-1}=MAgNg^{-1}$. In particular $MA$ (and $\Lambda_0$) fixes $x$.

Take now $\gamma\in \Gamma \cap \Stab_G(x) = \Gamma \cap gPg^{-1}$. Write $\gamma=man$ with $m\in M$, $a\in A$ and $n\in gNg^{-1}$. By \cite[Proposition 8.2.4]{Zi84} there exists a sequence $(b_k)_k$ in $MA$ such that $b_knb_k^{-1}$ converges to the identity element $1_G$. Since $\Lambda_0$ is a uniform lattice in $MA$, there exists a subsequence $(b_{k_j})_j$ and a sequence $(c_{j})_j \subset \Lambda_0$ such that $b_{k_j}c_{j}^{-1}$ converges in $MA$. It is easy to conclude that $c_jnc_j^{-1}$ converges to the identity. Now, $c_{j}\gamma c_{j}^{-1}$ lies in $\Gamma$ and we have
\[c_j\gamma c_j^{-1} = c_{j}(man)c_{j}^{-1} = (c_{j}mc_{j}^{-1})a(c_{j}nc_{j}^{-1}), \, \text{ for all } j.\] 
But $c_{j}mc_{j}^{-1}$ belongs to the compact set $M$, so taking a subsequence if necessary, we see that $c_{j}\gamma c_{j}^{-1}$ converges to an element in $MA$. By discreteness of $\Gamma$, this implies that $c_{j}\gamma c_{j}^{-1} \in MA$ for $j$ large enough. Therefore $\gamma \in MA \cap \Gamma = \Lambda_0$ which proves Claim 2.

To prove that $\Lambda$ is singular in $\Gamma$, consider an element $g \in \Gamma$ such that $g \cdot \mu$ is not singular with respect to $\mu$. Then $g \cdot F \cap F \neq \emptyset$, so there exist two points $x,y \in F$ such that $y = gx$. This implies that $g\Lambda_0 g^{-1}$ fixes $y$, while $g^{-1}\Lambda_0g$ fixes $x$. From Claim 2 we deduce that $g$ normalizes $\Lambda_0$, so that $\Lambda$ is indeed singular in $\Gamma$.

To complete the proof of the proposition, it remains to show that $\Lambda_0$ has finite index inside $\Lambda$, which will ensure that $\Lambda$ is virtually abelian.

Assume that $g \in \Gamma$ normalizes $\Lambda_0$. Since $A$ lies in the Zariski closure of $\Lambda_0$, we have $gAg^{-1} \subset MA$. But $MA$ has a unique maximal $\R$-split torus, $A$. So $gAg^{-1} = A$ and $g$ normalizes $A$. Moreover $MA$ coincides with the centralizer $Z_G(A)$ of $A$ in $G$. Now we have only to observe that the Weyl group $N_G(A)/Z_G(A)$ is finite, see \cite[Section VII.7, item 7.84]{Kn96} for instance.
\end{proof}

\begin{rem}
For $\SL_3(\Z)$, let $\Lambda_0$ be the group generated by the following two commuting matrices: \[\begin{pmatrix}0&0&1\\1&0&-16\\0&1&8\end{pmatrix}\text{ and }\begin{pmatrix}81&4&-4\\-36&17&68\\4&-4&-15\end{pmatrix}.\]  

Then $\Lambda_0$ has finite index in a singular subgroup of $\SL_3(\Z)$. We do not know whether $\Lambda_0$ itself is singular in $\SL_3(\Z)$.
\end{rem}

\begin{rem}
\label{abelianexample}
We remark here that whenever $\Gamma$ is a co-compact and torsion free lattice of a real algebraic group $G$ without compact factors, there exists a free abelian subgroup of $\Gamma$ that is singular in $\Gamma$. In fact the authors in \cite{RS10} proved that under these hypothesis, the Cartan subgroup constructed in \cite{PR03} $H\subset G$ is such that $\Lambda_0:=\Gamma\cap H$ is isomorphic to $\Z^{\rk_\R(G)}$ and $\Lambda_0$ is malnormal in $\Gamma$. So we can use this Cartan subgroup in the proof of the above proposition to get that $\Lambda_0$ has finite index in a singular subgroup of $\Gamma$ and since $\Lambda_0$ is malnormal in $\Gamma$, then we must have that $\Lambda_0$ is singular in $\Gamma$. 
\end{rem}

%%%%%
\section{Amenable subalgebras as stabilizers of measures on some compact space}
\label{sectionstabilizer}

As explained in the introduction, the key of the above results is to view maximal amenable subalgebras of a group von Neumann algebra $L\Gamma$ as centralizers of states on some reduced crossed-product C$^*$-algebra $C(X) \rtimes_r \Gamma$. In this section we further develop this point of view and explain its link with more theoretical questions. What follows is largely inspired from the work of N. Ozawa on solidity \cite{Oz04,Oz10}.

The following proposition is the main ingredient. Our initial argument for (iii) relied on \cite[Lemma 5]{Oz04} and used exactness of the group in a redundant way. We are grateful to S. Vaes for suggesting to us a cleaner approach and to one of the referees for emphasizing and correcting a gap in an earlier version of the paper.

\begin{prop}
\label{centralizer}
Assume that $\Gamma$ is a countable discrete group which acts continuously on a compact space $X$. Denote by $B := C(X) \rtimes_r \Gamma$ the reduced crossed-product C$^*$-algebra. Consider a state $\varphi$ on $B$ which coincides on $C^*_r(\Gamma)$ with the canonical trace $\tau$.
The following are true.
\begin{enumerate}[(i)]
\item Given $x \in L\Gamma$ and $T \in B$, for every bounded sequence $(x_n)_n$ in $C^*_r(\Gamma) \subset B$ which converges strongly to $x$, the sequence $(\varphi(x_nT))_n$ converges and the limit depends only on $x$ and $T$. Therefore one can define $\varphi(xT) = \lim_n \varphi(x_nT)$ and similarly $\varphi(Tx)$.
\item The set $A_\varphi :=  \{x \in L\Gamma \, \vert \, \varphi(xT) = \varphi(Tx), \, \forall T \in B \}$ is a von Neumann subalgebra of $L\Gamma$.
\item If the action is topologically amenable in the sense of \cite{AD87} (see also \cite[Section 4.3]{BO08} for more on this), then $A_\varphi$ is amenable. Of course, every maximal amenable subalgebra of $L\Gamma$ arises this way.
\end{enumerate}
\end{prop}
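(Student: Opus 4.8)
The plan is to derive parts (i) and (ii) from the single fact that $\varphi|_{C^*_r(\Gamma)}=\tau$ is a trace, via elementary Hilbert-space estimates, and to obtain (iii) by the hypertrace argument Ozawa introduced for solidity. For (i): Cauchy--Schwarz for the state $\varphi$ together with $\varphi(aa^*)=\tau(aa^*)=\Vert a\Vert_2^2$ for $a\in C^*_r(\Gamma)$ yields $|\varphi(aT)|\le\Vert a\Vert_2\Vert T\Vert$ and $|\varphi(Ta)|\le\Vert T\Vert\Vert a\Vert_2$ for $a\in C^*_r(\Gamma)$ and $T\in B$, so $a\mapsto\varphi(aT)$ and $a\mapsto\varphi(Ta)$ are $\Vert\cdot\Vert_2$-continuous on $C^*_r(\Gamma)$. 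Now if $(x_n)$ is bounded in $C^*_r(\Gamma)$ and $x_n\to x\in L\Gamma$ strongly, testing against $\delta_e$ shows $\Vert x_n-x\Vert_2\to 0$; hence $(\varphi(x_nT))_n$ and $(\varphi(Tx_n))_n$ are Cauchy, and if $(x_n')$ is another such sequence with the same strong limit then $\Vert x_n-x_n'\Vert_2\to 0$, so the limits depend only on $x$ and $T$. This proves (i); in passing one records $|\varphi(xT)|\le\Vert x\Vert_2\Vert T\Vert$, $|\varphi(Tx)|\le\Vert T\Vert\Vert x\Vert_2$ and $\varphi(x^*T)=\overline{\varphi(T^*x)}$ (using $\Vert x_n^*-x^*\Vert_2=\Vert x_n-x\Vert_2\to 0$).

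For (ii): $A_\varphi$ is plainly a unital linear subspace, it is $\ast$-stable by the identity $\varphi(x^*T)=\overline{\varphi(T^*x)}$, and it is $\sigma$-weakly closed because each $x\mapsto\varphi(xT)-\varphi(Tx)$ is $\Vert\cdot\Vert_2$-Lipschitz, hence $\sigma$-weakly continuous on bounded sets (Krein--Smulian). The substantive point is multiplicativity. I would first upgrade the defining relation: for $x\in A_\varphi$, $y\in L\Gamma$, $S\in B$, pick bounded $y_m\in C^*_r(\Gamma)$ with $y_m\to y$ strongly; then $Sy_m, y_mS\in B$ while $xy_m, y_mx\to xy, yx$ in $\Vert\cdot\Vert_2$, and passing to the limit in $\varphi\bigl(x(Sy_m)\bigr)=\varphi\bigl((Sy_m)x\bigr)$ and in $\varphi\bigl((y_mS)x\bigr)=\varphi\bigl(x(y_mS)\bigr)$ — legitimate by the estimates above — gives
\[
\varphi(xSy)=\varphi(Syx)\quad\text{and}\quad\varphi(ySx)=\varphi(xyS)\qquad(x\in A_\varphi,\ y\in L\Gamma,\ S\in B).
\]
Then for $x,y\in A_\varphi$ and $T\in B$, the second identity (applied with $y\in L\Gamma$) gives $\varphi(xyT)=\varphi(yTx)$, and the first (applied with $y\in A_\varphi$) gives $\varphi(yTx)=\varphi(Txy)$; hence $xy\in A_\varphi$ and $A_\varphi$ is a von Neumann algebra.

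Part (iii) carries the real content. Fix a net $(\theta_i)$ of finitely supported maps $\Gamma\to C(X)_+$ witnessing topological amenability: $\sum_g\theta_i(g)^2=1$ and $\Vert s\cdot\theta_i-\theta_i\Vert\to 0$ for each $s\in\Gamma$. Following the strategy Ozawa used for solidity (\cite{Oz04}; see also \cite[Section 4.3]{BO08}), in the streamlined form indicated by S.~Vaes, one uses the $\theta_i$ to build normal u.c.p. maps $\Theta_i\colon B(L^2(L\Gamma))\to B^{**}$ into the enveloping von Neumann algebra of $B$ which, with $\overline\varphi$ the normal extension of $\varphi$ to $B^{**}$, satisfy: (a) $\overline\varphi(\Theta_i(x))\to\tau(x)$ for every $x\in L\Gamma$; and (b) they are asymptotically $C^*_r(\Gamma)$-bimodular, $\Theta_i(aTb)-a\,\Theta_i(T)\,b\to 0$ $\sigma$-weakly for $a,b\in C^*_r(\Gamma)$ and $T\in B(L^2(L\Gamma))$. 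Any weak-$\ast$ cluster point $\Phi$ of $(\overline\varphi\circ\Theta_i)_i$ is then a state on $B(L^2(L\Gamma))$ with $\Phi|_{L\Gamma}=\tau$ by (a). Moreover $\Phi$ is $A_\varphi$-central: for $x\in A_\varphi$ and $T\in B(L^2(L\Gamma))$, a limiting argument — approximate $x$ strongly by elements of $C^*_r(\Gamma)$, use (a)–(b) together with the estimates of (i), and note that the defining relation of $A_\varphi$ extends from $B$ to $B^{**}$ by $\sigma$-weak density — shows $\Phi(xT)=\Phi(Tx)$. Hence $\Phi$ is an $A_\varphi$-central hypertrace on $B(L^2(L\Gamma))$, and $A_\varphi$ is amenable by Connes' theorem. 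The delicate step — which I expect to be the main obstacle — is the construction of the $\Theta_i$ from topological amenability and the verification that the weak-$\ast$ limit retains (a) and $A_\varphi$-centrality; this is exactly where Ozawa's technique enters, and where passing through $B^{**}$ (rather than through exactness via \cite[Lemma 5]{Oz04}) keeps the argument clean.

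Finally, for the last assertion: if $A\subseteq L\Gamma$ is maximal amenable, then $A$ is injective, so the $\tau$-preserving conditional expectation $L\Gamma\to A$ extends to a u.c.p. map $E\colon B(\ell^2\Gamma)\to A$; by Tomiyama's theorem $E$ is $A$-bimodular, so $\omega:=\tau_A\circ E$ is an $A$-central state on $B(\ell^2\Gamma)$ with $\omega|_{L\Gamma}=\tau$. Taking $X=\beta\Gamma$ (topologically amenable when $\Gamma$ is exact) and restricting $\omega$ to $C(\beta\Gamma)\rtimes_r\Gamma\subseteq B(\ell^2\Gamma)$ gives a state $\varphi$ as in the statement; one checks $A\subseteq A_\varphi$ using $\omega|_{L\Gamma}=\tau$ to pass to the limit in (i), and then maximality of $A$ forces $A=A_\varphi$.
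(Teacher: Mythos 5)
Parts (i) and (ii) of your argument are correct, and they proceed by a more elementary route than the paper's: the paper passes to the GNS representation $\pi_\varphi$, cuts by the support projection $p$ of the trace $\tilde\varphi$ on $\pi_\varphi(C^*_r(\Gamma))''$, and extends $x\mapsto p\pi_\varphi(x)$ to a normal embedding $\sigma\colon L\Gamma\to p\,\pi_\varphi(B)''\,p$, after which (i) is just normality of $\sigma$ and $\tilde\varphi$, and (ii) is the statement that $\sigma(A_\varphi)$ is the intersection of $\sigma(L\Gamma)$ with the centralizer of the normal state $\tilde\varphi$. Your Cauchy--Schwarz estimates achieve the same ends. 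Two small points you should make explicit: in the multiplicativity step the symbol $\varphi(yTx)$ is given two a priori different meanings, namely $\lim_m\varphi\bigl((y_mT)x\bigr)$ coming from your second identity and $\lim_n\varphi\bigl(y(Tx_n)\bigr)$ coming from the first, so you need the interchange of the iterated limits of $\varphi(y_mTx_n)$; this does hold (Moore--Osgood), because $\vert\varphi\bigl(y_mT(x_n-x_{n'})\bigr)\vert\le\sup_m\Vert y_m\Vert\,\Vert T\Vert\,\Vert x_n-x_{n'}\Vert_2$ makes the limit in $n$ uniform in $m$. Likewise ``$\Vert\cdot\Vert_2$-Lipschitz hence $\sigma$-weakly continuous on bounded sets'' is not literally true as stated; the correct phrasing is that a bounded net in $A_\varphi$ converging strongly converges in $\Vert\cdot\Vert_2$, so the unit ball of $A_\varphi$ is strongly, hence weakly, hence $\sigma$-weakly closed, and Krein--Smulian applies (or: note that $\Vert\cdot\Vert_2$-bounded functionals on $L\Gamma$ are vector functionals in the standard representation, hence normal).

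The genuine gap is in (iii), which is where the real content of the proposition lies. Your proof consists of postulating normal u.c.p.\ maps $\Theta_i\colon B(L^2(L\Gamma))\to B^{**}$ with properties (a) and (b) and an unspecified ``limiting argument'' giving $A_\varphi$-centrality of the cluster point; none of this is constructed or verified, and you flag it yourself as the main obstacle. Moreover two of the asserted properties are not routine: (a) on all of $L\Gamma$ (rather than on $C^*_r(\Gamma)$) does not follow from almost-invariance of the $\theta_i$ without a normality device, since convergence of the states $\overline\varphi\circ\Theta_i$ on a strongly dense C$^*$-subalgebra gives nothing at a fixed element of the weak closure; and ``the defining relation of $A_\varphi$ extends from $B$ to $B^{**}$ by $\sigma$-weak density'' is not automatic, because your extension of $\varphi$ to products with $L\Gamma$ is by $\Vert\cdot\Vert_2$-limits, not $\sigma$-weak ones --- making these steps precise essentially forces you to introduce the normal embedding $\sigma$ of $L\Gamma$ into a corner of $\pi_\varphi(B)''$, i.e.\ the paper's device. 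The paper's proof of (iii) is in fact much shorter and avoids hypertrace machinery altogether: topological amenability gives nuclearity of $B$, hence injectivity of $\tilde B=\pi_\varphi(B)''$ and of $p\tilde Bp$; since $\sigma(A_\varphi)$ lies in the centralizer of the normal state $\tilde\varphi$ and $\tilde\varphi$ is a faithful normal trace on $\sigma(L\Gamma)$, a standard argument (the lemma following the proposition in the paper) produces a $\tilde\varphi$-preserving conditional expectation $p\tilde Bp\to\sigma(A_\varphi)$, whence $A_\varphi$ is amenable. I recommend replacing your sketch of (iii) by this argument. Your treatment of the final assertion (a maximal amenable $A$ satisfies $A\subseteq A_\varphi$ for $\varphi$ obtained by restricting an $A$-central state, with $X=\beta\Gamma$ and $\Gamma$ exact) is correct, and is more detail than the paper supplies.
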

\begin{proof} Before proceeding to the proof of (i)-(iii), let us fix some notations.
Denote by $(\pi_\varphi,H_\varphi,\xi_\varphi)$ the GNS triplet associated with $B$ and $\varphi$. Extend the state $\varphi$ to a normal state $\tilde \varphi$ on $\tilde B := \pi_\varphi(B)''$ by the formula $\tilde \varphi(x) = \langle x \xi_\varphi,\xi_\varphi \rangle$. Denote also by $\tilde C := \pi_\varphi(C^*_{r}(\Gamma))'' \subset \tilde B$. Note that $\tilde \varphi$ is a normal trace on $\tilde C$. Consider the central projection $p \in \tilde C$ that supports this trace, so that $\tilde \varphi$ is a faithful normal trace on $p\tilde C$. Then we see that the map $\sigma_0: C^*_r(\Gamma) \to p\tilde C$ defined by $\sigma_0(x) = p\pi_\varphi(x)$ for all $x \in C^*_{r}(\Gamma)$ is a trace-preserving $*$-morphism. Hence it extends to a normal $*$-isomorphism $\sigma: L\Gamma \to p\tilde C \subset p\tilde Bp$.

(i) Take $x \in L\Gamma$ and a sequence $x_n \in C$ that converges strongly to $x$, then we have that $\lim_n \varphi(x_nT) = \tilde \varphi(\sigma(x)\pi_\varphi(T))$, which does not depend on the choice of the sequence $(x_n)_n$. Indeed, since $\tilde \varphi(p) = 1$, we see that for all $n$,
\[\varphi(x_nT) = \tilde \varphi(\pi_\varphi(x_nT)) = \tilde \varphi(p\pi_\varphi(x_n)\pi_\varphi(T)) = \tilde \varphi(\sigma(x_n)\pi_\varphi(T)).\]
So the desired convergence is a consequence of the normality of $\sigma$ and $\tilde \varphi$.

(ii) From the formula obtained in (i), we see that 
\[A_\varphi = \{ x \in L\Gamma \, \vert \, \tilde \varphi(\sigma(x)\pi_\varphi(T)) = \tilde \varphi(\pi_\varphi(T)\sigma(x)), \forall T \in B\}.\]
Thus $\sigma(A_\varphi)$ is the intersection of $\sigma(L\Gamma) = p\tilde C$ with the centralizer of $\tilde \varphi$ in $p\tilde B p$. In particular $\sigma(A_\varphi)$ is a von Neumann algebra and so is $A_\varphi$.

(iii) Assume that the action is amenable. Then $B$ is nuclear, and so $\tilde B = \pi_\varphi(B)''$ is injective. In particular $p\tilde Bp$ is injective as well. Moreover, there exists a $\tilde \varphi$-preserving conditional expectation $E: p\tilde Bp \to \sigma(A_\varphi)$, because $\sigma(A_\varphi)$ centralizes $\tilde \varphi$. Hence $A_\varphi$ is amenable. The existence of $E$ follows from a standard argument that we include in the lemma below, for completeness.
\end{proof}

\begin{lem}
Consider a von Neumann algebra $M$ with a state $\varphi$ on it (not necessarily faithful). Take a von Neumann subalgebra $Q \subset M$ that centralizes $\varphi$ and assume that $\varphi$ is faithful and normal on $Q$ (so that it is a faithful normal trace on $Q$).
Then there exists a $\varphi$-preserving conditional expectation $E:M \to Q$.
\end{lem}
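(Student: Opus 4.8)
The plan is to build the conditional expectation directly from the GNS construction for $\varphi$, using the fact that $\varphi$ is a faithful normal trace on $Q$ to get an orthogonal projection onto $L^2(Q)$ that implements it. First I would pass to the GNS representation $(\pi_\varphi, H_\varphi, \xi_\varphi)$ of $M$ associated with $\varphi$, and replace $M$ by $\pi_\varphi(M)''$ with its vector state $x \mapsto \langle x\xi_\varphi, \xi_\varphi\rangle$, so without loss of generality $\varphi$ is a normal state on $M$ given by a cyclic vector $\xi_\varphi$. Since $Q$ centralizes $\varphi$ and $\varphi$ is faithful normal on $Q$, the closure $\overline{Q\xi_\varphi} =: L^2(Q,\varphi) \subset H_\varphi$ carries a trace-class structure: the map $q\xi_\varphi \mapsto q^*\xi_\varphi$ extends to an anti-unitary involution $J_Q$ on this subspace, exactly as in the standard form of a finite von Neumann algebra.

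Next I would let $e_Q \in B(H_\varphi)$ denote the orthogonal projection onto $L^2(Q,\varphi)$, and observe that $e_Q$ commutes with $Q$ (this is the Jones projection: for $q, q' \in Q$, $e_Q q \xi_\varphi \cdot$ agrees with $q \cdot e_Q$ on the dense set because $Q\xi_\varphi$ is invariant and the complement is too, using that $Q$ centralizes $\varphi$ to check invariance of the orthogonal complement). Then I would define $E: M \to B(H_\varphi)$ by the requirement $\widehat{E(x)} = e_Q \, x\xi_\varphi$, i.e. $E(x)$ is the unique element of $Q$ whose GNS vector is $e_Q x \xi_\varphi$; such an element exists because $e_Q x \xi_\varphi$ lies in $L^2(Q,\varphi)$ and, by a standard boundedness argument (the map $q'\xi_\varphi \mapsto e_Q x q' \xi_\varphi = e_Q x J_Q q'^* J_Q \xi_\varphi$ is bounded by $\|x\|$ times right multiplication, hence comes from a bounded operator on $L^2(Q,\varphi)$ affiliated with $Q$), it is the GNS vector of a genuine element of $Q$. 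Positivity and the bound $\|E(x)\| \le \|x\|$ follow from $e_Q$ being a projection; $E|_Q = \mathrm{id}$ and $\varphi \circ E = \varphi$ are immediate since $e_Q \xi_\varphi = \xi_\varphi$; and the bimodule property $E(q_1 x q_2) = q_1 E(x) q_2$ for $q_i \in Q$ follows from $e_Q$ commuting with $Q$.

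The one genuine subtlety — and the step I expect to be the main obstacle — is normality of $E$, since $\varphi$ need not be faithful on $M$ and $L^2(Q,\varphi)$ is only a subspace of the possibly-degenerate $H_\varphi$. To handle this I would verify that $x \mapsto e_Q x \xi_\varphi$ is continuous from the $\sigma$-weak topology on bounded sets of $M$ to the weak topology on $L^2(Q,\varphi)$ (which it is, being a compression of the normal map $x \mapsto x\xi_\varphi$), and then argue that the map $x \mapsto E(x)$ is the composition of this with the identification of $L^2(Q,\varphi)$-vectors with elements of $Q$, which is a homeomorphism for the relevant weak topologies on bounded sets by standardness of $Q$; alternatively, one checks directly that $\varphi(E(x) q) = \varphi(x q)$ for all $q \in Q$, which pins $E(x)$ down weakly and gives normality since each $x \mapsto \varphi(xq)$ is normal and $\varphi$ is faithful on $Q$. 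Either route is routine once the Jones-projection picture is set up, so I would present the faithful-normal-trace-on-$Q$ hypothesis as exactly what makes $L^2(Q,\varphi)$ a standard form and let that carry the argument.
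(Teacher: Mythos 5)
Your construction is, in substance, the paper's own: the operator you want to call $E(x)$ is the compression $e_Q\pi_\varphi(x)e_Q$ acting on $L^2(Q,\varphi)$, which is exactly the operator $T_x$ that the paper obtains from the bounded sesquilinear form $\cB_x(a,b)=\varphi(b^*xa)$ (the paper simply avoids representing all of $M$). The problem is that the one step carrying the mathematical content is missing. Boundedness of $\hat q'\mapsto e_Q\pi_\varphi(x)\hat q'$ only produces an operator in $B(L^2(Q,\varphi))$; to conclude that it is left multiplication by an element of $Q$ --- equivalently that $e_Qx\xi_\varphi$ is a left-bounded vector whose associated operator lies in $Q$, equivalently that $E$ is right $Q$-modular --- you must check that this operator commutes with the right action $J_QQJ_Q$, and this is precisely where the hypothesis that $Q$ centralizes $\varphi$ on all of $M$ is used: for $a,b,y\in Q$ one has $\langle e_Q\pi_\varphi(x)\widehat{ay},\hat b\rangle=\varphi(b^*xay)$, while $\langle (J_Qy^*J_Q)\,e_Q\pi_\varphi(x)\hat a,\hat b\rangle=\varphi(yb^*xa)$, and these agree only because $\varphi(Ty)=\varphi(yT)$ for $T=b^*xa\in M$ (this is the paper's computation $\langle T_x(ay),b\rangle=\langle T_x(a)y,b\rangle$). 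Your parenthetical justification ``$e_Qxq'\xi_\varphi=e_QxJ_Qq'^*J_Q\xi_\varphi$'' is a tautology, since $J_Qq'^*J_Q\xi_\varphi=\hat q'$, and does not substitute for that verification; ``bounded, hence affiliated with $Q$'' is not a valid inference. Note also that you invoke the centralizing hypothesis only to show that $e_Q$ commutes with $\pi_\varphi(Q)$, which is automatic for any subspace invariant under a $*$-closed set of operators; so as written, the hypothesis that $Q$ centralizes $\varphi$ on $M$ (as opposed to $\varphi|_Q$ being a trace, which you do need for $J_Q$) is never used where it is indispensable.

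A secondary point: your final paragraph pursues normality of $E$, which the lemma does not assert and which the paper's application does not need (injectivity passes through arbitrary, not necessarily normal, conditional expectations). Moreover the argument offered there presupposes that $\varphi$ is normal on $M$ --- the maps $x\mapsto x\xi_\varphi$ and $x\mapsto\varphi(xq)$ are $\sigma$-weakly continuous only in that case --- which is not among the hypotheses, and indeed your initial ``without loss of generality'' replacement of $M$ by $\pi_\varphi(M)''$ is only harmless for the statement as given (one transports back via $(\pi_\varphi|_Q)^{-1}\circ E'\circ\pi_\varphi$), not for any normality claim. Dropping that paragraph and supplying the commutation computation above would make your argument complete and essentially identical to the paper's.
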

\begin{proof}
Given $x \in M$, define a sesquilinear form $\cB_x$ on $Q \times Q$ by the formula $\cB_x(a,b) = \tilde \varphi(b^*xa)$.

Then the Cauchy-Schwarz inequality gives $\vert \cB_x(a,b) \vert \leq \Vert x \Vert \Vert a \Vert_2 \Vert b \Vert_2$.
In particular $\cB_x$ induces a sesquilinear form on $L^2(Q,\varphi) \times L^2(Q,\varphi)$ and there exists a unique operator $T_x \in B(L^2(Q,\varphi))$ such that
\[\cB_x(\xi,\eta) = \langle T_x(\xi),\eta \rangle, \, \forall \xi, \eta \in L^2(Q,\varphi) \qquad \text{and} \qquad \Vert T_x \Vert \leq \Vert x \Vert.\]
Now we check that $T_x \in Q$. Take $y \in Q$ and $a,b \in Q$. We have
\[\langle T_x(ay),b \rangle = \varphi(b^*xay) = \varphi(yb^*xa) = \langle T_x(a),by^* \rangle = \langle T_x(a)y,b \rangle.\]
Therefore $T_x$ commutes with the right action of $y$. Since $y \in Q$ is arbitrary, we deduce that $T_x \in Q$.
The desired conditional expectation is then defined by the formula $E(x) := T_x$, for all $x \in M$.
\end{proof}

Let us provide some applications of Proposition \ref{centralizer} to solidity and strong solidity for bi-exact groups \cite{Oz04,OP10a,OP10b,CS13}.

\begin{defn}[\cite{BO08}, Section 15.2] 
\label{biexact}
A discrete group $\Gamma$ is {\it bi-exact} if there exists a compactification $X$ of $\Gamma$ such that
\begin{enumerate}
\item the left translation action of $\Gamma$ on itself extends to a continuous action $\Gamma \curvearrowright X$ which is topologically amenable;
\item the right translation action of $\Gamma$ on itself extends continuously to an action on $X$ which is trivial on the boundary $X \setminus \Gamma$.
\end{enumerate}
\end{defn}

For instance any hyperbolic group is bi-exact (because the Gromov compactification $\DG$ satisfies the above conditions).

Given a bi-exact group $\Gamma$, choose a compactification $X$ as in Definition \ref{biexact}. Since it is a compactification, we have inclusions $c_0(\Gamma) \subset C(X) \subset \ell^\infty(\Gamma)$. 
Denote by $\lambda$ and $\rho$ respectively the left and right regular representations of $\Gamma$ on $\ell^2\Gamma$, and define
\[B_\Gamma := C^*(C(X) \cup \lambda(\Gamma)) \subset B(\ell^2\Gamma).\]

By \cite[Proposition 5.1.3]{BO08}, $B_\Gamma$ is isomorphic to the reduced crossed product $C(X) \rtimes_{r} \Gamma$ by the left action of $\Gamma$. Moreover condition \ref{biexact}.(2) implies that $B_\Gamma$ commutes with $C^*_\rho(\Gamma)$ modulo compact operators:
\begin{equation}\label{compactcommutators}
[B_\Gamma,C^*_\rho(\Gamma)] \subset C^*(\lambda(\Gamma) \cdot [C(X),\rho(\Gamma)]) \subset C^*(\lambda(\Gamma) \cdot c_0(\Gamma)) \subset K(\ell^2(\Gamma)).
\end{equation}

We now show how solidity and strong solidity results can be deduced from Proposition \ref{centralizer}.

\begin{thm}[\cite{Oz04}]
If $\Gamma$ is bi-exact, then $L\Gamma$ is solid, meaning that the relative commutant of any diffuse subalgebra of $L\Gamma$ is amenable.
\end{thm}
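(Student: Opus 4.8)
The plan is to recast Ozawa's boundary argument so that it runs through Proposition~\ref{centralizer}. Let $A\subseteq M:=L\Gamma$ be a diffuse von Neumann subalgebra; I want to show that its relative commutant $Q:=A'\cap M$ is amenable. Since $\Gamma$ is bi-exact I would fix a compactification $X$ as in Definition~\ref{biexact}, so that $B_\Gamma=C^*(C(X)\cup\lambda(\Gamma))\cong C(X)\rtimes_r\Gamma$ is the reduced crossed product by a topologically amenable action and satisfies the commutation-modulo-compacts relation \eqref{compactcommutators}, i.e.\ $[B_\Gamma,C^*_\rho(\Gamma)]\subseteq K(\ell^2\Gamma)$. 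As $A$ is diffuse one may pick unitaries $a_n\in A$ with $a_n\to 0$ weakly, so that the unit vectors $\widehat{a_n}\in\ell^2\Gamma\simeq L^2(M)$ tend weakly to $0$. First I would take the weak-$*$ limit $\varphi_0$, along a free ultrafilter $\omega$ on $\N$, of the vector states $T\mapsto\langle T\widehat{a_n},\widehat{a_n}\rangle$ on $B(\ell^2\Gamma)$, and set $\varphi:=\varphi_0\vert_{B_\Gamma}$. Traciality gives $\langle y\widehat{a_n},\widehat{a_n}\rangle=\tau(a_n^*ya_n)=\tau(y)$ for $y\in C^*_r(\Gamma)$, so $\varphi$ coincides with $\tau$ on $C^*_r(\Gamma)$ and Proposition~\ref{centralizer} applies: $A_\varphi$ is a von Neumann subalgebra of $M$, amenable by part~(iii) because $\Gamma\curvearrowright X$ is topologically amenable. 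It then remains only to prove that $Q\subseteq A_\varphi$, since a von Neumann subalgebra of a finite amenable von Neumann algebra is amenable (use the trace-preserving conditional expectation).

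Since $A_\varphi$ is a von Neumann algebra, proving $Q\subseteq A_\varphi$ amounts to showing, for every unitary $x\in\cU(Q)$ and every $T\in B_\Gamma$, that $\varphi(xT)=\varphi(Tx)$. The point is to transfer the left multiplication by $x$ to a right multiplication, where \eqref{compactcommutators} can be brought to bear. Put $r_x:=Jx^*J\in R\Gamma$, so $r_x\widehat{b}=\widehat{bx}$; since $x$ commutes with every $a_n$ one gets the key identity $r_x\widehat{a_n}=\widehat{a_nx}=\widehat{xa_n}=x\widehat{a_n}$, and similarly for $x^*$. By Kaplansky density I would choose $x_j\in C^*_r(\Gamma)$ with $\Vert x_j\Vert\le 1$ and $\Vert x_j-x\Vert_2\to 0$; these converge strongly to $x$, so $\varphi(xT)=\lim_j\varphi(x_jT)$ and $\varphi(Tx)=\lim_j\varphi(Tx_j)$ by Proposition~\ref{centralizer}(i), while $r_{x_j}=Jx_j^*J\in C^*_\rho(\Gamma)$. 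The crucial estimate — again a consequence of traciality and of the $a_n$ being unitary — is that each of the vectors $x_j\widehat{a_n},\,x_j^*\widehat{a_n},\,r_{x_j}\widehat{a_n},\,r_{x_j}^*\widehat{a_n}$ lies within $\Vert x_j-x\Vert_2$ of $r_x\widehat{a_n}$, respectively $r_x^*\widehat{a_n}$, \emph{uniformly in $n$}. This uniformity allows one to replace $x_j$ by $r_x$, and then $r_x$ by $r_{x_j}$, inside the limits without affecting the value, yielding
\[ \varphi(Tx)-\varphi(xT)=\lim_{n\to\omega}\langle[T,r_x]\widehat{a_n},\widehat{a_n}\rangle=\lim_j\lim_{n\to\omega}\langle[T,r_{x_j}]\widehat{a_n},\widehat{a_n}\rangle. \]
Finally $[T,r_{x_j}]\in K(\ell^2\Gamma)$ by \eqref{compactcommutators}, and a compact operator carries the weakly null sequence $(\widehat{a_n})$ to a norm null sequence, so every inner limit vanishes and $\varphi(Tx)=\varphi(xT)$, as desired.

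The main obstacle is exactly this transfer from $x$ to $r_x=Jx^*J$. Left multiplication by a general element of $M$ does not commute with $B_\Gamma$ modulo compacts — it is the right regular representation that does — so one must pass to $r_x$, and this substitution is only legitimate on the particular vectors $\widehat{a_n}$ because $x$ centralises the $a_n$ and $\Vert\cdot\Vert_2$ is unitarily invariant. The delicate part of the argument is therefore the bookkeeping around two simultaneous approximations: $x_j\to x$, which is what makes $\varphi(xT)$ meaningful via Proposition~\ref{centralizer}(i), and $r_{x_j}\to r_x$, which is what places the commutator inside $C^*_\rho(\Gamma)$; one has to check that every error term is controlled by $\Vert x_j-x\Vert_2$ \emph{uniformly in $n$}, which is precisely what licenses pulling $\lim_j$ out through $\lim_{n\to\omega}$. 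Everything else is routine: nuclearity of $B_\Gamma$ is already built into Proposition~\ref{centralizer}(iii), the fact that compact operators send weakly null sequences to norm null sequences is elementary, and heredity of amenability for von Neumann subalgebras of finite von Neumann algebras is standard.
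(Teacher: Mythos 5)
Your proposal is correct and follows essentially the same route as the paper: build the ultrafilter-limit state from a weakly null sequence of unitaries in the diffuse algebra, apply Proposition~\ref{centralizer} to get the amenable algebra $A_\varphi$, and transfer left multiplication by a commuting unitary to $Jx^*J$ so that the compact-commutator relation \eqref{compactcommutators} and the vanishing of $\varphi$ on compacts (your ``compact operators send weakly null to norm null'' step) force $\varphi(xT)=\varphi(Tx)$. The only cosmetic differences are that the paper proves the formally stronger statement for asymptotically commuting unitaries and phrases the approximation via normality of $\varphi$ on $R\Gamma$, whereas you work with the exact relative commutant and do the uniform-in-$n$ vector estimates explicitly; both are fine.
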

\begin{proof}
Consider a sequence of unitaries $(u_n) \subset \cU(L\Gamma)$ which tends weakly to $0$. We will show that the von Neumann algebra $A$ of elements $x \in L\Gamma$ satisfying $\Vert [x,u_n] \Vert_2 \to 0$ is amenable.

Consider the state $\varphi$ on $B(\ell^2\Gamma)$ defined by \[\varphi(T) := \lim_{n \to \omega} \langle T\hat u_n, \hat u_n \rangle,\]
where $\omega$ is a free ultrafilter on $\N$. Note that $\varphi_{|L\Gamma} = \tau = \varphi_{|R\Gamma}$ and that $\varphi$ vanishes on the compact operators because $u_n$ tends weakly to $0$.
Applying Proposition \ref{centralizer}, we get that $A_\varphi =  \{x \in L\Gamma \, \vert \, \varphi(xT) = \varphi(Tx), \, \forall T \in B_\Gamma \}$ is an amenable von Neumann algebra. Let us show that $A \subset A_\varphi$.

Take $u \in \cU(A)$. By definition of $A$, we have for any $T \in B(H)$
\[\varphi(Tu) = \lim_{n \to \omega}  \langle T(uu_n),u_n\rangle = \lim_{n \to \omega}  \langle T(u_nu),u_n \rangle = \varphi(TJu^*J).\]
Similarly, we have $\varphi(uT)= \varphi(Ju^*JT)$.

Fix a bounded sequence $(x_k) \subset C^*_\rho(\Gamma)$ which converges strongly to $Ju^*J$. Since $\varphi_{|R\Gamma}$ is normal, the Cauchy-Schwarz inequality implies that
\begin{itemize}
\item $\lim_k \varphi(Tx_k) = \varphi(TJu^*J)$ and
\item $\lim_k \varphi(x_kT) = \varphi(Ju^*JT)$.
\end{itemize}
Now for each $k$, the operator $[T,x_k]$ is compact thanks to \eqref{compactcommutators}. Since $\varphi$ vanishes on compact operators we get 
\[\varphi(uT) = \varphi(Ju^*JT) = \lim_k \varphi(x_kT) = \lim_k \varphi(Tx_k) = \varphi(TJu^*J) = \varphi(Tu).\qedhere\]
\end{proof}

\begin{thm}[\cite{OP10a,CS13}]
\label{ssolidity}
If $\Gamma$ is bi-exact and weakly amenable (this is the case if $\Gamma$ is hyperbolic, \cite{Oz08}) then $L\Gamma$ is strongly solid, in the sense that the normalizer of a diffuse amenable subalgebra of $L\Gamma$ is amenable.
\end{thm}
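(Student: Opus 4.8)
The goal is to deduce strong solidity of $L\Gamma$ from the machinery of Proposition \ref{centralizer}, adapting the solidity argument just given to the normalizer setting. The strategy is to follow Ozawa–Popa's scheme: take a diffuse amenable subalgebra $P \subset L\Gamma$ and its normalizer $\cN := \cN_{L\Gamma}(P)$, and show $\cN'' $ is amenable. Since $P$ is diffuse, we can choose a sequence of unitaries $(u_n) \subset \cU(P)$ tending weakly to $0$. As in the solidity proof, form the state $\varphi$ on $B(\ell^2\Gamma)$ by $\varphi(T) := \lim_{n \to \omega} \langle T \hat u_n, \hat u_n\rangle$; then $\varphi_{|L\Gamma} = \tau = \varphi_{|R\Gamma}$ and $\varphi$ vanishes on $K(\ell^2\Gamma)$. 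By Proposition \ref{centralizer}(iii), $A_\varphi = \{x \in L\Gamma \,:\, \varphi(xT) = \varphi(Tx),\ \forall T \in B_\Gamma\}$ is amenable. It therefore suffices to prove $\cN \subset A_\varphi$, i.e. that every unitary $v$ normalizing $P$ centralizes $\varphi$ against $B_\Gamma$.

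The key new point, compared with the solidity proof, is that a normalizing unitary $v$ need not \emph{commute} with the $u_n$ — it only satisfies $v u_n v^* \in P$, so conjugation by $v$ permutes (approximately) the relevant subset of $\cU(P)$. The standard device to handle this is weak amenability: since $\Gamma$ is weakly amenable, $L\Gamma$ has the weak-$*$ completely bounded approximation property (W$^*$CBAP), hence $P$, being diffuse and amenable, is \emph{weakly compact} in $L\Gamma$ in the sense of Ozawa–Popa. Concretely, weak compactness produces a net of unit vectors $(\eta_k)$ in $L^2(P) \otimes_{?} \overline{L^2(P)}$ (or rather in the Hilbert space $L^2(P \bar\otimes \bar P)$-type object) which is asymptotically invariant both under $P$ (acting by $a \mapsto a \otimes 1$, $\bar a \otimes 1$) and under the conjugation action of $\cN$. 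One then uses these vectors, together with a weak-$*$ limit, to build a suitable state in place of $\varphi$ that is simultaneously centralized by $P$ \emph{and} invariant under $\Ad(v)$ for $v \in \cN$. The crucial input from bi-exactness is still the relation \eqref{compactcommutators}: $[B_\Gamma, C^*_\rho(\Gamma)] \subset K(\ell^2\Gamma)$, which lets us move from left to right multiplication at the cost of a compact operator that $\varphi$ kills.

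The argument then runs parallel to the solidity proof, with an extra averaging layer. Fix $v \in \cU(\cN)$ and $T \in B_\Gamma$. Using the weak-compactness vectors, one replaces $\varphi$ by a state $\psi$ (built from a limit over the weakly compact net, intertwined with the ultrafilter limit over $n$) such that $\psi_{|L\Gamma} = \psi_{|R\Gamma} = \tau$, $\psi$ vanishes on compacts, $\psi(v T v^*) = \psi(T)$ for all $T$ and all $v \in \cN$, and such that $A_\psi$ is still amenable. The invariance $\psi(vTv^*) = \psi(T)$ expresses that $v$ centralizes $\psi$ against operators of the form $T$ and $JvJ T Jv^*J$ simultaneously; combined with the compact-commutator trick (approximate $Jv^*J \in R\Gamma$ strongly by a bounded sequence in $C^*_\rho(\Gamma)$, use \eqref{compactcommutators}, use normality of $\psi_{|R\Gamma}$ and vanishing on compacts) one concludes $\psi(vT) = \psi(Tv)$, i.e. $\cN \subset A_\psi$. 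Hence $\cN'' \subset A_\psi$ is amenable.

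The main obstacle is getting the \emph{state} to have the right simultaneous invariance: $\varphi$ as defined from a fixed weakly null sequence in $P$ is centralized by $P$ but a priori not by $\cN$, since $\Ad(v)$ merely permutes such sequences. Upgrading to a state invariant under all of $\Ad(\cN)$ is exactly where weak amenability / weak compactness of $P$ in $L\Gamma$ is indispensable — this is the technical heart, and it is precisely the content of the Ozawa–Popa weak-compactness machinery (and its refinement in \cite{CS13}); one should cite these and invoke them rather than reprove them. Everything else — the GNS construction, the compact-commutator computation, the application of Proposition \ref{centralizer} — is a routine adaptation of the solidity proof above.
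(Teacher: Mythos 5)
Your overall strategy is the paper's: use weak amenability to produce a ``weakly compact'' state attached to the diffuse amenable subalgebra, and use bi-exactness, via the compact commutators \eqref{compactcommutators} and Proposition \ref{centralizer}, to place the normalizer inside the amenable algebra $A_\varphi$. However, the key property you require of your state $\psi$ is misstated, and as stated it is both unobtainable and circular. You ask for $\psi(vTv^*)=\psi(T)$ for \emph{every} $T\in B(\ell^2\Gamma)$ and every $v\in\cN$. Plugging in $T=Sv$ shows that this is literally equivalent to $\psi(vS)=\psi(Sv)$ for all $S\in B(\ell^2\Gamma)$, i.e.\ to $\psi$ being an $\cN$-central state on $B(\ell^2\Gamma)$; since $\psi_{|L\Gamma}=\tau$, the argument of Proposition \ref{centralizer}(ii) (with $B(\ell^2\Gamma)$ in place of $B_\Gamma$) upgrades this to $\cN''$-centrality, which is precisely the amenability of $\cN''$ --- the conclusion you are trying to prove. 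If weak compactness alone produced such a state, neither $B_\Gamma$ nor \eqref{compactcommutators} would be needed, and every weakly amenable group would have a strongly solid group von Neumann algebra; this fails, e.g.\ for $F_2\times F_2$, which is weakly amenable while $L(F_2\times F_2)$ is not even solid.

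What weak compactness actually provides (this is Theorem \ref{Oz12.B}, quoted from \cite{OP10a,Oz12}) is a state centralized by the unitaries $vJvJ$ implementing conjugation on $\ell^2\Gamma$, namely $\psi(vJvJ\,T)=\psi(T\,vJvJ)$ for $v\in\cN$, together with $A$-centrality, $\psi(x)=\tau(x)=\psi(Jx^*J)$ for $x\in L\Gamma$, and --- since $A$ is diffuse, by \cite[Lemma 3.3]{OP10b} --- vanishing on $K(\ell^2\Gamma)$. The entire role of bi-exactness is to bridge the gap between ``centralized by $vJvJ$'' and ``centralized by $v$ against $B_\Gamma$'': one writes $\psi(Tv)=\psi(vJvJ\,T\,Jv^*J)$, approximates $Jv^*J$ strongly by a bounded sequence $(x_k)\subset C^*_\rho(\Gamma)$, and uses \eqref{compactcommutators}, the normality of $\psi$ on $R\Gamma$ and the vanishing on compacts to commute $x_k$ past $T$, obtaining $\psi(vT)=\psi(Tv)$ for all $T\in B_\Gamma$ --- exactly the computation you gesture at in your final step. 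So if you replace your stated invariance by the $vJvJ$-centralization of Theorem \ref{Oz12.B} (your phrase about centralizing ``against $T$ and $JvJTJv^*J$ simultaneously'' seems to be aiming at this but does not say it), and drop the preliminary state built from a weakly null sequence $(u_n)\subset\cU(P)$, which plays no role in the normalizer case, your sketch becomes the paper's proof.
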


Our proof still relies on the following weak compactness property due to Ozawa and Popa. The formulation is a combination of \cite[Theorem 3.5]{OP10a} and \cite[Theorem B]{Oz12} with the characterization of weak compactness given in \cite[Proposition 3.2(4)]{OP10a}.

\begin{thm}[\cite{OP10a,Oz12}]
\label{Oz12.B}
Assume that $\Gamma$ is weakly amenable. Then for any amenable subalgebra $A$ of $L\Gamma$, there exists a state $\varphi$ on $B(\ell^2\Gamma)$ such that
\begin{enumerate}
\item $\varphi(xT) = \varphi(Tx)$ for every $T \in B(\ell^2\Gamma)$ and $x \in A$;
\item  $\varphi(uJuJ T) = \varphi(T uJuJ)$ for every $T \in B(\ell^2\Gamma)$ and $u \in \cN_{L\Gamma }(A)$;
\item $\varphi(x) = \tau(x) = \varphi(Jx^*J)$ for every $x \in L\Gamma$.
\end{enumerate}
\end{thm}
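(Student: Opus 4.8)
The statement is a \emph{weak compactness} result in the sense of Ozawa--Popa, and the plan is to first extract a weakly compact approximating net from the two hypotheses and then manufacture the state $\varphi$ out of it. Since $\Gamma$ is weakly amenable it carries a uniformly completely bounded, finitely supported approximate identity $(\phi_k)$ of Herz--Schur multipliers; combining this with the amenability of $A$ (which supplies a net of almost $A$-central positive unit vectors in $L^2(A)\otimes\overline{L^2(A)}$ whose $A$-marginals are the trace) is exactly what \cite[Theorem 3.5]{OP10a} and \cite[Theorem B]{Oz12} achieve: they yield that the inclusion $A\subseteq L\Gamma$ is weakly compact relative to $\cN_{L\Gamma}(A)$. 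The averaging argument producing this net, which uses the completely bounded multipliers to upgrade asymptotic $A$-centrality to asymptotic invariance under the whole normalizer while keeping the vectors positive and normalized, is where the real work lies; I would simply quote it. The remaining part is the essentially formal realization of the state, which is the content of the characterization \cite[Proposition 3.2(4)]{OP10a}.

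Concretely, weak compactness furnishes a net $(\eta_n)$ of positive unit vectors in $L^2(A)\otimes\overline{L^2(A)}$, which I view as positive Hilbert--Schmidt operators on $\ell^2\Gamma$ of Hilbert--Schmidt norm one. Under the identification $\ell^2\Gamma\otimes\overline{\ell^2\Gamma}\cong \mathrm{HS}(\ell^2\Gamma)$, the diagonal unitary $v\otimes\bar v$ acts by $\eta\mapsto v\eta v^*$, and the conjugation action $\Ad_u\otimes\overline{\Ad_u}$ of the normalizer acts by $\eta\mapsto (uJuJ)\,\eta\,(uJuJ)^*$; here one uses the elementary computation that $uJuJ$ is precisely the unitary of $B(\ell^2\Gamma)$ implementing $\Ad_u$ on $L^2(L\Gamma)=\ell^2\Gamma$. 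The three defining properties of the net then read: $\Vert v\eta_n v^*-\eta_n\Vert\to0$ for $v\in\cU(A)$; $\Vert (uJuJ)\eta_n(uJuJ)^*-\eta_n\Vert\to0$ for $u\in\cN_{L\Gamma}(A)$; and the left and right $A$-marginals of $\eta_n$ are the trace, i.e. $\langle(x\otimes1)\eta_n,\eta_n\rangle=\tau(x)=\langle(1\otimes\bar x)\eta_n,\eta_n\rangle$ for $x\in A$.

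The next step is to define normal states $\varphi_n$ on $B(\ell^2\Gamma)$ by $\varphi_n(T)=\langle T\eta_n,\eta_n\rangle_{\mathrm{HS}}=\Tr(T\eta_n^2)$; these are genuine states since $\eta_n^2$ is a positive trace-class density of trace $\Vert\eta_n\Vert_{\mathrm{HS}}^2=1$. I would then set $\varphi=\lim_{n\to\omega}\varphi_n$, a weak-$\ast$ cluster point along a free ultrafilter $\omega$ on $\N$, a state on $B(\ell^2\Gamma)$. For property (1), the first asymptotic invariance gives $\varphi_n(vTv^*)-\varphi_n(T)\to0$ for $v\in\cU(A)$ (using that conjugation preserves the Hilbert--Schmidt norm and the estimate $\Vert A^2-B^2\Vert_1\le\Vert A-B\Vert_2(\Vert A\Vert_2+\Vert B\Vert_2)$), so $\varphi$ is $\Ad_v$-invariant; since for a \emph{unitary} $U$ the identities $\varphi(UTU^*)=\varphi(T)$ and $\varphi(US)=\varphi(SU)$ are equivalent, $v$ centralizes $\varphi$, and by linearity so does every $x\in A$. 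Property (2) is identical with $v$ replaced by the unitary $uJuJ$, using the second asymptotic invariance. For property (3), the point is that $\eta_n^2$ is supported under the Jones projection $e_A$, so that for any $x\in L\Gamma$ one has $\Tr(x\eta_n^2)=\Tr(E_A(x)\eta_n^2)$ and, symmetrically, $\Tr((Jx^*J)\eta_n^2)=\Tr((JE_A(x)^*J)\eta_n^2)$; the two marginal conditions then give $\varphi_n(x)=\tau(E_A(x))=\tau(x)$ and $\varphi_n(Jx^*J)=\tau(x)$ for every $x\in L\Gamma$, not merely for $x\in A$. Passing to the limit would yield all three properties at once.

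I expect the only genuine obstacle to be the first step, namely producing the weakly compact net from weak amenability together with the amenability of $A$: this is the hard analytic input and I would take it as a black box from \cite{OP10a,Oz12}. Everything downstream is the routine packaging of such a net into a state on $B(\ell^2\Gamma)$, a bookkeeping of the tensor-leg identifications (in particular recognizing $uJuJ$ as the conjugation implementer), and the conditional-expectation argument promoting the marginal conditions from $A$ to all of $L\Gamma$.
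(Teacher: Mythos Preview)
The paper does not prove this statement at all: it is quoted as a black box, with the sentence immediately preceding it saying ``The formulation is a combination of \cite[Theorem 3.5]{OP10a} and \cite[Theorem B]{Oz12} with the characterization of weak compactness given in \cite[Proposition 3.2(4)]{OP10a}.'' Your proposal is precisely this unpacking---cite the hard analytic input (existence of the weakly compact net) and then run the routine passage from the net to the state---so the approach matches.

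One small imprecision in your bookkeeping: the net produced by \cite[Theorem 3.5]{OP10a}/\cite[Theorem B]{Oz12} lives in $L^2(L\Gamma)\otimes\overline{L^2(L\Gamma)}$, not in $L^2(A)\otimes\overline{L^2(A)}$; the multiplier averaging that buys normalizer-invariance does not preserve the latter subspace. Correspondingly, the marginal condition in the definition of weak compactness is $\langle(x\otimes 1)\eta_n,\eta_n\rangle\to\tau(x)$ for all $x\in L\Gamma$, not just $x\in A$, so property (3) follows directly and your Jones-projection detour via $e_A$ is both unnecessary and unjustified. This does not affect the correctness of the strategy, since you are in any case invoking \cite[Proposition 3.2(4)]{OP10a} for the final packaging.
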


The new part of the proof is the conclusion of strong solidity from the existence of such a state. It becomes extremely simple.

\begin{proof}[Proof of Theorem \ref{ssolidity}]
Assume that $A$ is a diffuse amenable subalgebra and consider a state $\varphi$ on $B(\ell^2\Gamma)$ as in Theorem \ref{Oz12.B}. By Proposition \ref{centralizer}, it suffices to show that $\cN_{L\Gamma }(A) \subset A_\varphi = \{x \in L\Gamma \, \vert \, \varphi(xT) = \varphi(Tx), \, \forall T \in B_\Gamma \}$.

First note that \cite[Lemma 3.3]{OP10b} implies that $\varphi$ vanishes on compact operators because $A$ is diffuse.

Take $u \in \cN_{L\Gamma }(A)$ and $T \in B_\Gamma$. By definition of $\varphi$, we have $\varphi(uJuJTJu^*J) = \varphi(Tu)$.

Fix a bounded sequence $(x_k) \subset C^*_\rho(\Gamma)$ which converges strongly to $Ju^*J$. Since $\varphi_{|R\Gamma}$ is normal, the Cauchy-Schwarz inequality implies that
\begin{itemize}
\item $\lim_k \varphi(uJuJTx_k) = \varphi(uJuJTJu^*J)$ and
\item $\lim_k \varphi(uJuJx_kT) = \varphi(uT)$.
\end{itemize}
Now for each $k$, the operator $uJuJ[T,x_k]$ is compact thanks to \eqref{compactcommutators}. Since $\varphi$ vanishes on compact operators we get 
\[\varphi(uT) = \lim_k \varphi(uJuJx_kT) = \lim_k \varphi(uJuJTx_k) = \varphi(uJuJTJu^*J) = \varphi(Tu).\qedhere\]
\end{proof}

Let us mention that one could also do a relative version of this strategy to prose relative strong solidity results. In particular, one could recover some of the results in \cite{PV14a,PV14b}. Note that the proof given in \cite{PV14b} also relies on bi-exactness explicitly.

%%%%%
% bibliographie

\bibliographystyle{alpha1}

\end{document}